\newcommand{\Homeo}{\mathrm{Homeo}}
\newcommand{\Aut}{\mathrm{Aut}}
\newcommand{\Iso}{\mathrm{Iso}}
\newcommand{\LIso}{\mathrm{LIso}}
\newcommand{\Rea}{\mathbb{R}}
\newcommand{\Nat}{\mathbb{N}}
\newcommand{\Int}{\mathbb{Z}}
\newcommand{\PP}{\mathcal{P}}
\newcommand{\XX}{\mathcal{X}}
\newcommand{\YY}{\mathcal{Y}}
\newcommand{\DD}{\mathcal{D}}
\newcommand{\Act}{\mathrm{Act}}
\newcommand{\vertiii}[1]{{\left\vert\kern-0.25ex\left\vert\kern-0.25ex\left\vert #1 
		\right\vert\kern-0.25ex\right\vert\kern-0.25ex\right\vert}}
\theoremstyle{plain}
\newtheorem{theorem}{\bf Theorem}[section]
\newtheorem{lemma}[theorem]{\bf Lemma}
\newtheorem{proposition}[theorem]{\bf Proposition}
\newtheorem{corollary}[theorem]{\bf Corollary}
\newtheorem{fact}[theorem]{\bf Fact}
\theoremstyle{definition}
\newtheorem{definition}[theorem]{\bf Definition}
\newtheorem{remark}[theorem]{\bf Remark}
\newtheorem{question}[theorem]{\bf Question}
\title{Invariant strictly convex renormings}
\author{Michal Doucha}
\address{Institute of Mathematics, Czech Academy of Sciences, Žitná 25, 115 67 Praha 1, Czechia}
\email{doucha@math.cas.cz}
\urladdr{https://users.math.cas.cz/~doucha/}
\keywords{strictly convex norm, renorming, group actions on Banach spaces, amenable groups}
\subjclass[2020]{46B03,46B04, 43A07}
\thanks{The author was supported by the GA\v{C}R project 25-15366S and by the Czech Academy of Sciences (RVO 67985840).}
\begin{document}
	\maketitle
	
\begin{abstract}
Motivated by the question of Mikael de la Salle, we investigate the problem of the existence of equivalent strictly convex norms on Banach spaces that are invariant with respect to an action of a group by linear isometries. We develop various tools for constructing such norms and prove several preservation results. We also answer positively a question of Antunes, Ferenczi, Grivaux and Rosendal whether there is a strictly convex renorming of $c$ invariant with respect to its full linear isometry group.

Finally, we specialize to the spaces $L_1[0,1]$ and $C(K)$, where $K$ is compact Hausdorff, and indicate that amenability of groups plays a role in this problem.
\end{abstract}

\section{Introduction}
During the problem session of the 2018 CIRM conference `Non Linear Functional Analysis', Mikael de la Salle proposed to study the following problem. Let $G$ be a (topological) group acting by linear isometries on a Banach space $X$. Under which conditions, on the Banach space, the group, and the action, does there exist an equivalent strictly convex norm on $X$ that is invariant with respect to the action?

This problem had been considered in some form before. It appeared in the work of Lancien \cite{Lan93} in the context of locally uniformly convex renorming. Invariant locally uniformly convex renormings were then more systematically studied by Ferenczi and Rosendal in \cite{FR11,FR13} and Antunes, Ferenczi, Grivaux, Rosendal in \cite{AFGR19}, where the main emphasis was on the existence of a renorming that is invariant with respect to the full linear isometry group of the Banach space. The shift of the emphasis in de la Salle's question on more general actions of possibly simpler groups proves to be meaningful. Indeed, we will show there are Banach spaces for which there are no stricly convex renormings with respect to the full linear isometry groups, yet we can say something about individual actions of smaller groups.

The renorming theory is a well-established and well-developed subject within the Banach space theory, see the monographs \cite{DGZ-book,GMZ-book}. Nevertheless, the `invariant part' of the theory is yet to be properly explored as the type of problems investigated in this work seen in the classical theory belong to the ancient history of the subject. The fact that every separable Banach space admits a strictly convex renorming was proved by Clarkson in \cite{Cla36} and first examples of Banach spaces without strictly convex renorming appear in Day's paper \cite{Day55}. In the invariant theory, these problems are already a challenge and the research in this topic connects the classical renorming theory of Banach spaces with the theory of group actions on Banach spaces and generally measurable and topological dynamics of groups.

\subsection{Main results} Here we summarize the main results of this paper.
\begin{itemize}
	\item We provide a characterization of Banach spaces equipped with a continuous action of a topological group by linear isometries admitting an invariant strictly convex renormings, including a special version for separable Banach spaces (see Propositions~\ref{prop:SCrenorming1} and~\ref{prop:SCrenorming2}, and Corollary~\ref{cor:SCrenorming}).
	\item We show that Banach spaces admitting a strictly convex renormings invariant under their full linear isometry groups are closed under $\ell_1$ and $c_0$-sums, under some natural mild conditions (see Theorems~\ref{thm:l1sumspreservation} and~\ref{thm:closurec0sums}).
	\item We answer positively \cite[Question 8.1]{AFGR19} by showing that the Banach space $c$ admits a strictly convex renorming invariant with respect to its full linear isometry group (see Theorem~\ref{thm:cGSCR}).
	\item We show that despite the fact there is an action of $F_2$, the free group on $2$ generators, on $L_1[0,1]$ by lattice isometries without an invariant strictly convex renorming, every countable group admits a free action on $L_1[0,1]$ by lattice isometries that admit an invariant strictly convex renorming. When the group is amenable, the set of such actions is dense in the space of actions (see Proposition~\ref{prop:nastyF2}, Corollary~\ref{cor:actiononL1}, and Theorem~\ref{thm:densityforL1}).
	\item We provide conditions under which actions of amenable groups on $C(K)$ spaces admit invariant strictly convex renormings and we establish a `quasi-dichotomy': A generic action by linear isometries of a countable locally finite group on $C(2^\Nat)$ admits an invariant strictly convex renorming, yet a generic action by positive linear isometries of a group containing $F_2$ on $C(2^\Nat)$ does not admit an invariant strictly convex renorming (see Theorems~\ref{thm:cptspaces}, {}~\ref{thm:locfinite}, and~\ref{thm:haveF2}).
\end{itemize}

\subsection{Organization of the paper} The next Section~\ref{sec:prelim} provides a quick introduction to all the main notions appearing in the paper, including convex norms, group actions on Banach spaces and spaces of actions, amenable groups, and descriptions of the full linear isometry groups of $L_1[0,1]$ and $C(K)$ spaces.

Section~\ref{sec:general} provides characterizations of when a Banach space equipped with a group action by linear isometries admits an invariant strictly convex renormings - these will be applied in further sections. Section~\ref{sec:fulliso} focuses on preservations results and answers \cite[Question 8.1]{AFGR19}. Sections~\ref{sec:L1}, resp.~\ref{sec:C(K)} investigate invariant renormings of $L_1[0,1]$, resp. of $C(K)$. Finally, Section~\ref{sec:questions} presents several questions whose answers could push the subject further.
\section{Notation and preliminaries}\label{sec:prelim}
\noindent\textbf{Agreement.} All Banach spaces in this paper are considered to be real, although the complex case can be treated in virtually every argument here the same or very similar. Topological groups are always considered to be Hausdorff.\medskip

Given a Banach space $X$, we shall denote by $\Iso(X)$ the group of all linear isometries of $X$ equipped with the strong operator topology. When $X$ has an additional lattice structure, we shall denote by $\LIso(X)$ the subgroup of all lattice (or positive) linear isometries with the inherited topology, i.e. again the strong operator topology.\medskip

\subsection{Group actions.} If $G$ is a topological group and $X$ is a topological space, a \emph{(continuous) action} $\alpha$ of $G$ on $X$ is a (continuous) map $\alpha:G\times X\to X$ satisfying
\begin{itemize}
	\item $\alpha(gh,x)=\alpha\big(g,\alpha(h,x)\big)$, for all $g,h\in X$ and $x\in X$;
	\item $\alpha(1_G,x)=x$ for all $x\in X$.
\end{itemize}
In the sequel all actions are assumed to be continuous, even without explicitly mentioning the continuity. The topological spaces on which groups will act in this document are mainly Banach spaces, in which case the actions will be by linear isometries, or compact Hausdorff spaces. As an exception to this case, we will consider actions on measure spaces in Section~\ref{sec:L1}.

We remark a basic fact that given a topological group $G$ and a continuous action $\alpha:G\times X\to X$ of $G$ on a Banach space $X$ by linear isometries, we can identify $\alpha$ with a continuous homomorphism, by abusing the notation denoted by the same symbol, $\alpha:G\to\Iso(X)$ defined by $\alpha(g)(x):=\alpha(g,x)$, for $g\in G$ and $x\in X$. Conversely, every continuous homomophism $\alpha:G\to\Iso(X)$ defined a continuous action $\alpha:G\times X\to X$ by linear isometries defined by $\alpha(g,x):=\alpha(g)(x)$, for $g\in G$ and $x\in X$.\medskip

A Banach space equipped with an action of a topological group $G$ by linear isometries will be sometimes called a \emph{$G$-Banach space}. The natural morphism between $G$-Banach spaces is the following.

\begin{definition}
Let $G$ be a topological group and $X$ and $Y$ be $G$-Banach spaces. We call a linear operator $\phi:X\to Y$ \emph{$G$-equivariant}, or just \emph{equivariant}, if $\phi(g\cdot x)=g\cdot \phi(x)$ for all $g\in G$ and $x\in X$.
\end{definition}

\subsubsection{Spaces of actions}
Later in the paper there will be situations where we cannot, or it is not possible, to prove that given a topological group $G$ and a Banach space $X$, every/no continuous action of $G$ on $X$ admits a $G$-invariant strictly convex renorming. However, it will be possible to say that a `majority' of actions do. To formalize this, we need to introduce the space of actions. We restrict just to the case of countable (discrete) groups. The setting can be considered in bigger generality though, e.g. for locally compact second countable groups.

Fix a countable group $G$ and a Banach space $X$. The goal is to define a topological space of all actions of $G$ on $X$ by linear (or positive linear if $X$ is a Banach lattice) isometries. Since actions of $G$ on $X$ by linear isometries can be identified with homomorphisms from $G$ into $\Iso(X)$, we can topologize the space of all actions of $G$ on $X$ by linear isometries, resp. positive linear isometries if $X$ is a Banach lattice, denoted by $\Act_G(X)$, resp. $\Act_G^+(X)$, by seeing it as a closed subset of $\Iso(X)^G$, resp. $\LIso(X)^G$, equipped with the product topology. This gives a completely metrizable, in particular Baire, topology on $\Act_G(X)$ and $\Act_G^+(X)$. If  $X$ is separable, then $\Act_G(X)$ and $\Act_G^+(X)$ are Polish.
\subsection{Convex norms on Banach spaces}
Here we recall several notions of convexity of norms on Banach spaces. In particular, we recall the definition of a strictly convex norm.

\begin{definition}
Let $(X,\|\cdot\|)$ be a Banach space. Recall that $\|\cdot\|$ is
\begin{itemize}
	\item \emph{strictly convex} if for every $x\neq y\in X$ with $\|x\|=\|y\|$ necessarily $\|\frac{x+y}{2}\|<\|x\|$.
	
	\item \emph{locally uniformly convex (LUC)} if for every $x\in X$ and a sequence $(x_n)_n\subseteq X$ we have $\|x\|=\lim_{n\to\infty} \|x_n\|=\lim_{n\to\infty} \|\frac{x+x_n}{2}\|$ if and only if $\lim_{n\to\infty} \|x_n-x\|=0$.
	
	\item \emph{uniformly convex} if for every $0<\varepsilon\leq 2$ there is $\delta>0$ such that for $x,y\in S_X$, $\|x-y\|\geq \varepsilon$ implies $\|\frac{x+y}{2}\|<1-\delta$.
\end{itemize}

Clearly, uniformly convex norm is locally uniformly convex and locally uniformly convex norm is strictly convex.
\end{definition}
Standard examples of Banach spaces with uniformly convex norms are $L_p(\Omega,\mu)$, where $p\in(1,\infty)$ and $(\Omega,\mu)$ is any measure space. On the other hand, $L_p(\Omega,\mu)$, for $p\in\{1,\infty\}$, and unless one-dimensional, is not strictly convex.

The following fact is well known and will be used without explicit reference throughout the paper.
\begin{fact}
Let $(X_n)_{n\in\Nat}$ be a sequence of strictly convex Banach spaces. Then for $p\in(1,\infty)$, their $\ell_p$-sum $\bigoplus_{\ell_p} X_n$ is strictly convex.
\end{fact}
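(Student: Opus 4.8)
The plan is to reduce strict convexity of the $\ell_p$-sum to strict convexity of each summand together with the elementary strict convexity of the real function $t\mapsto t^p$ on $[0,\infty)$ for $p\in(1,\infty)$. Write $X=\bigoplus_{\ell_p}X_n$ and take $x=(x_n)_n$, $y=(y_n)_n\in X$ with $\|x\|=\|y\|$ and $\|\tfrac{x+y}{2}\|=\|x\|$; after rescaling we may assume $\|x\|=\|y\|=1$, so that $\sum_n\|x_n\|^p=\sum_n\|y_n\|^p=1$. The goal is to conclude $x=y$.

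First I would chain two coordinatewise estimates: the triangle inequality in $X_n$ gives $\|\tfrac{x_n+y_n}{2}\|\le\tfrac{\|x_n\|+\|y_n\|}{2}$, and convexity of $t\mapsto t^p$ gives $\big(\tfrac{\|x_n\|+\|y_n\|}{2}\big)^p\le\tfrac{\|x_n\|^p+\|y_n\|^p}{2}$. Summing over $n$ yields
\begin{align*}
1=\Big\|\tfrac{x+y}{2}\Big\|^p&=\sum_n\Big\|\tfrac{x_n+y_n}{2}\Big\|^p\le\sum_n\Big(\tfrac{\|x_n\|+\|y_n\|}{2}\Big)^p\\
&\le\tfrac12\sum_n\|x_n\|^p+\tfrac12\sum_n\|y_n\|^p=1,
\end{align*}
so, since the outer terms agree and all summands are nonnegative, every inequality is an equality term by term.

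Equality in the second estimate, together with strict convexity of $t\mapsto t^p$ for $p>1$, forces $\|x_n\|=\|y_n\|$ for every $n$. Equality in the first estimate then reads $\|\tfrac{x_n+y_n}{2}\|=\tfrac{\|x_n\|+\|y_n\|}{2}=\|x_n\|$. For indices $n$ with $x_n\neq 0$ we have $\|x_n\|=\|y_n\|$ and $\|\tfrac{x_n+y_n}{2}\|=\|x_n\|$, so strict convexity of $X_n$ gives $x_n=y_n$; for indices with $x_n=0$ we get $\|y_n\|=0$ as well, hence again $x_n=y_n$. Therefore $x=y$.

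There is no serious obstacle here; the only point requiring care is the bookkeeping of the equality cases — in particular that equality in $\big(\tfrac{a+b}{2}\big)^p=\tfrac{a^p+b^p}{2}$ with $p>1$ genuinely forces $a=b$ even when one of $a,b$ vanishes, and that the coordinates with $x_n=0$ must be treated separately since strict convexity of $X_n$ is only invoked at nonzero vectors. Alternatively, the middle step can be phrased as strict convexity of $\ell_p$ applied to the sequences $(\|x_n\|)_n$ and $(\|y_n\|)_n$, which is legitimate since $\ell_p$ is uniformly convex for $p\in(1,\infty)$.
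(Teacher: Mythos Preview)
Your proof is correct and is the standard argument for this classical fact. Note that the paper does not actually supply a proof: the statement is recorded as ``well known'' and used without justification, so there is no paper proof to compare against. Your treatment of the equality cases is clean; the only cosmetic remark is that the separate handling of coordinates with $x_n=0$ is already subsumed by step~6, since $\|x_n\|=\|y_n\|$ immediately gives $y_n=0$ there.
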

Classical results of James and Enflo, see \cite{Jam1,Jam2,Jam3,Enf72}, state that a Banach space admits an equivalent uniformly convex norm if and only if it is superreflexive. On the other hand, Kadec showed (see \cite{Kad59}) that every separable Banach space admits an equivalent locally uniformly convex norm. There are plethora of examples of non-separable Banach spaces that do not admit an equivalent strictly convex norm; e.g., Bourgain in \cite{Bour80} showed that $\ell_\infty/c_0$ is an example. Note that $\ell_\infty$ is a non-separable Banach space whose norm is not strictly convex, yet it does admit an equivalent strictly convex norm (this is the case for all spaces with a separable predual).

\subsubsection{Invariant convex norms.} Let $G$ be a topological group and $X$ be a $G$-Banach space, i.e. the group $G$ acts on $X$ by linear isometries with respect to the original defining norm of $X$. Let $\vertiii{\cdot}$ be any equivalent norm on $X$. Say that $\vertiii{\cdot}$ is \emph{$G$-invariant} if the action of $G$ is by linear isometries even with respect to $\vertiii{\cdot}$, i.e. $\vertiii{g\cdot x}=\vertiii{x}$, for all $x\in X$ and $g\in G$.\medskip

\noindent\textbf{Goal.} It is the main topic of the paper to determine for which groups $G$, Banach spaces $X$, and actions of $G$ on $X$, there are equivalent convex norms on $X$ that are $G$-invariant.

Let us start by recalling that this problem has a straightforward solution for uniformly convex norms.
\begin{fact}[Bader, Furman, Gelander, Monod {\cite[Proposition 2.3]{BFGM07}}]
Let $G$ be a topological group and $X$ be a superreflexive $G$-space. Then $X$ admits a $G$-invariant uniformly convex norm.
\end{fact}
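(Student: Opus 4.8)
The plan is to produce the invariant norm by starting from \emph{any} equivalent uniformly convex norm on $X$ — one exists by the James--Enflo characterization of superreflexivity recalled above — and then ``symmetrizing'' it by taking a supremum over the $G$-orbit. Concretely, fix an equivalent uniformly convex norm $\|\cdot\|_0$ on $X$ together with constants $0<a\le b$ such that $a\|x\|\le\|x\|_0\le b\|x\|$ for all $x\in X$, and set
\[
\vertiii{x}:=\sup_{g\in G}\|g\cdot x\|_0,\qquad x\in X.
\]
Since the original action is by $\|\cdot\|$-isometries we have $\|g\cdot x\|_0\le b\|g\cdot x\|=b\|x\|$, so the supremum is finite and $\vertiii{\cdot}$ is a seminorm with $\vertiii{\cdot}\le b\|\cdot\|$; taking $g=1_G$ gives $\vertiii{\cdot}\ge\|\cdot\|_0\ge a\|\cdot\|$, so $\vertiii{\cdot}$ is an equivalent norm (completeness is automatic). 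It is $G$-invariant because $\vertiii{h\cdot x}=\sup_{g}\|gh\cdot x\|_0=\sup_{g'}\|g'\cdot x\|_0=\vertiii{x}$ for every $h\in G$.

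The remaining point is that $\vertiii{\cdot}$ is uniformly convex, and this is where the isometry hypothesis on the \emph{original} norm is used in an essential way. First I would record the standard ``closed-ball'' version of uniform convexity for $\|\cdot\|_0$: from its modulus of convexity $\delta_0$ one extracts a function $\widetilde\delta_0:(0,2]\to(0,\infty)$ such that $\|u\|_0\le 1$, $\|v\|_0\le 1$ and $\|u-v\|_0\ge\varepsilon$ force $\|\tfrac{u+v}{2}\|_0\le 1-\widetilde\delta_0(\varepsilon)$; this follows by a short case analysis according to whether $\min\{\|u\|_0,\|v\|_0\}$ is close to $1$ (if not, the triangle inequality already gives a gain; if so, rescale to the unit sphere and apply $\delta_0$). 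Now take $x,y\in X$ with $\vertiii{x}=\vertiii{y}=1$ and $\vertiii{x-y}\ge\varepsilon$. Then $\varepsilon\le\vertiii{x-y}\le b\|x-y\|$, so $\|x-y\|\ge\varepsilon/b$, and since each $g$ acts as a $\|\cdot\|$-isometry, $\|g\cdot x-g\cdot y\|=\|g\cdot(x-y)\|=\|x-y\|\ge\varepsilon/b$, hence $\|g\cdot x-g\cdot y\|_0\ge a\|x-y\|\ge(a/b)\varepsilon$ \emph{for every} $g\in G$. On the other hand $\|g\cdot x\|_0\le\vertiii{x}=1$ and $\|g\cdot y\|_0\le\vertiii{y}=1$. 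Applying the closed-ball uniform convexity of $\|\cdot\|_0$ yields $\|\tfrac{g\cdot x+g\cdot y}{2}\|_0\le 1-\widetilde\delta_0\big((a/b)\varepsilon\big)$ for every $g$, and taking the supremum over $g$ gives $\vertiii{\tfrac{x+y}{2}}\le 1-\widetilde\delta_0\big((a/b)\varepsilon\big)<1$. Thus $\vertiii{\cdot}$ is uniformly convex with modulus $\delta(\varepsilon)=\widetilde\delta_0\big((a/b)\varepsilon\big)$.

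The one step that genuinely needs attention — and the only place where anything beyond bookkeeping happens — is the uniform-in-$g$ lower bound $\|g\cdot x-g\cdot y\|_0\ge(a/b)\varepsilon$: a supremum of uniformly convex norms need not be uniformly convex in general, and what rescues the argument is precisely that every $g$ acts isometrically for the original norm, so $\|g\cdot(x-y)\|$ is independent of $g$ and the two-sided equivalence of $\|\cdot\|_0$ with $\|\cdot\|$ transfers this into a lower bound for $\|\cdot\|_0$ on the entire orbit of $x-y$. Everything else — finiteness and equivalence of $\vertiii{\cdot}$, its $G$-invariance, and the passage from $\delta_0$ to its closed-ball version $\widetilde\delta_0$ — is routine.
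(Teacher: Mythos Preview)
The paper does not supply a proof of this fact; it is simply quoted from \cite[Proposition 2.3]{BFGM07} as background. Your argument is correct and is the standard one: the key point --- which you isolate clearly --- is that because $G$ acts by $\|\cdot\|$-isometries, $\|g\cdot(x-y)\|$ is independent of $g$, and the two-sided equivalence of $\|\cdot\|_0$ with $\|\cdot\|$ then transfers this into a lower bound on $\|g\cdot x-g\cdot y\|_0$ that is uniform over the orbit, so the uniform convexity of $\|\cdot\|_0$ survives the passage to the supremum $\vertiii{\cdot}$. The closed-ball reformulation of uniform convexity is standard (in fact one may take $\widetilde\delta_0=\delta_0$), so nothing is missing.
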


For locally uniformly convex norms this problem is already a challenge. The most powerful and general result in this direction is the following result of Lancien.
\begin{theorem}[Lancien {\cite[Theorem 2.1 and Remark after Lemma 2.5]{Lan93}}]\label{thm:Lancien}
Let $G$ be a topological group and $X$ be a separable $G$-space with the Radon-Nikod\'ym property. Then $X$ admits an equivalent $G$-invariant locally uniformly convex norm.
\end{theorem}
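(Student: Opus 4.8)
The plan is to carry out the classical construction of an equivalent locally uniformly convex norm for separable spaces with the Radon-Nikod\'ym property, but to perform every step by means of operations that commute with surjective linear isometries of $X$; then $G$-invariance of the resulting norm is automatic, since the $G$-action is by such isometries.

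\emph{Canonical slicing derivation.} Recall that $X$ has the Radon-Nikod\'ym property if and only if every closed bounded convex subset of $X$ is dentable, i.e.\ admits slices of arbitrarily small norm-diameter. For a closed bounded convex symmetric $A\subseteq X$ and $\varepsilon>0$, let $A'_\varepsilon$ be $A$ with all relatively open slices of diameter $<\varepsilon$ removed; a quick check (if $(x+y)/2$ lies in a half-space, so does $x$ or $y$) shows $A'_\varepsilon$ is again closed, bounded, convex and symmetric. Iterating transfinitely, $A^{(\xi+1)}_\varepsilon=(A^{(\xi)}_\varepsilon)'_\varepsilon$ and $A^{(\lambda)}_\varepsilon=\bigcap_{\xi<\lambda}A^{(\xi)}_\varepsilon$ at limits, one gets a decreasing transfinite chain which, by the Radon-Nikod\'ym property together with separability, reaches $\emptyset$ at some countable ordinal. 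The crucial point is equivariance: any $g\in G$ is a surjective linear isometry, so it carries slices of $A$ bijectively onto slices of $gA$ of the same diameter (if $S=\{x\in A: f(x)>\sup_A f-\alpha\}$ then $gS=\{y\in gA:((g^{-1})^*f)(y)>\sup_{gA}(g^{-1})^*f-\alpha\}$); hence $(gA)'_\varepsilon=g(A'_\varepsilon)$ and, by transfinite induction, $(gA)^{(\xi)}_\varepsilon=g(A^{(\xi)}_\varepsilon)$ for every $\xi$. Applying this with $A=B_X$, for which $gB_X=B_X$, we conclude that every set $(B_X)^{(\xi)}_\varepsilon$ in the derivation of the unit ball is $G$-invariant.

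\emph{Assembling the norm.} From this derivation one extracts, in the standard way, a countable family of nonnegative convex functions $(\phi_k)_k$ on $X$, indexed by a rational $\varepsilon$ and an ordinal below the countable termination index (a countable index set because $X$ is separable): roughly, $\phi_k$ is a convex ``slice-distance'' function that is controlled on $(B_X)^{(\xi)}_\varepsilon$ and witnesses when a point is captured inside a small slice at the $\xi$-th level. Since each $\phi_k$ is built solely from the $G$-invariant sets $(B_X)^{(\xi)}_\varepsilon$ and the ($G$-invariant) original norm, it satisfies $\phi_k(g\cdot x)=\phi_k(x)$. After normalising so that $\sum_k 2^{-k}\phi_k^2$ is a finite convex function comparable with $\|\cdot\|^2$, set
\[
\vertiii{x}^2 \;=\; \|x\|^2+\sum_k 2^{-k}\phi_k(x)^2 .
\]
This is an equivalent norm, it is $G$-invariant term by term, and --- by the very choice of the $\phi_k$ --- it is locally uniformly convex: the verification is the classical one (if $\vertiii{x_n}\to\vertiii{x}$ and $\vertiii{\tfrac{x+x_n}{2}}\to\vertiii{x}$, the $\phi_k$ force $x$ and eventually all $x_n$ into a common slice at some level which, by dentability, has diameter as small as we wish, so $\|x_n-x\|\to0$), and the extra $G$-invariance plays no role in it. If, as in some presentations, the locally uniformly convex norm is obtained only in the limit of an iterated ``one-step improvement'' procedure, the same remark applies: each improvement step is equivariant, so the limiting norm is again $G$-invariant.

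\emph{Main obstacle.} The delicate point is the extraction, in the previous paragraph, of a genuinely \emph{canonical} (hence $G$-equivariant) countable family of convex functions. For a general separable Banach space one produces a locally uniformly convex norm \`a la Kadec starting from an \emph{arbitrarily chosen} countable norming and separating sequence in $X^*$ (or a countable dense set of exposing functionals), and such arbitrary choices are not $G$-invariant and would destroy invariance; this is precisely why the Radon-Nikod\'ym property is needed, since it is exactly what makes the slicing derivation terminate and thereby makes a choice-free construction available. A secondary, routine matter is the bookkeeping of the countably many ordinals and rationals so that the series defining $\vertiii{\cdot}$ converges to an equivalent norm.
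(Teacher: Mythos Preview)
The paper does not give its own proof of this theorem; it is quoted as a result of Lancien with a citation to \cite{Lan93} and no argument is supplied. So there is nothing in the paper to compare your proposal against directly. What I can do is assess whether your outline matches Lancien's actual argument and whether it stands on its own.

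Your overall strategy is exactly Lancien's: run the dentability (Szlenk-type) derivation on $B_X$, observe that it is equivariant under surjective linear isometries because such maps send slices to slices of the same diameter, conclude that each derived set $(B_X)^{(\xi)}_\varepsilon$ is $G$-invariant, and then build the locally uniformly convex norm out of these sets so that invariance is inherited. The equivariance check you give is correct, and the convexity of $A'_\varepsilon$ is verified correctly.

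The genuine gap is in the paragraph ``Assembling the norm''. You write that one ``extracts, in the standard way, a countable family of nonnegative convex functions $(\phi_k)_k$ \ldots roughly, $\phi_k$ is a convex `slice-distance' function'', and then assert that the resulting norm is locally uniformly convex by ``the classical'' verification. This is not a proof: you have not defined the $\phi_k$, and the locally uniformly convex property does not follow from general nonsense about dentable sets --- it hinges on the precise interaction between the norms built from the derived sets (in Lancien's paper these are Minkowski functionals of suitable symmetric convex combinations of $(B_X)^{(\xi)}_\varepsilon$ with $\varepsilon$-balls) and the original norm. Your ``Main obstacle'' paragraph correctly identifies that an arbitrary Kadec-type construction would fail for invariance, but it does not fill the gap: you still owe a concrete definition of the $\phi_k$ purely in terms of the derived sets and the original norm, together with the actual LUC computation. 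As written, your proposal is a correct \emph{plan} for reproducing Lancien's proof, but the central analytic step is only gestured at.
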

For further research on invariant locally uniformly convex norms we refer to papers of Ferenczi and Rosendal \cite{FR11,FR13} and Antunes, Ferenczi, Grivaux, Rosendal \cite{AFGR19}. Particular results from these papers will be mentioned in appropriate places within this document.

\subsection{Amenable groups}
Amenability of groups will be shown to play a role in finding equivalent invariant strictly convex norms. In reality, the importance of amenability is likely much bigger than it is revealed in this paper and ought to be investigated further - although, as pointed out to us by C. Rosendal, amenability alone cannot guarantee the existence of an invariant strictly convex norm (see Section~\ref{sec:questions} for details).

We therefore provide a basic information about this concept.

\begin{definition}
A topological group $G$ is called \emph{amenable} if every continuous action of $G$ on a compact Hausdorff space $K$ admits an invariant regular Borel probability measure.
\end{definition}
We note that a measure $\mu$ on a topological space $X$ on which $G$ acts is \emph{invariant} if for every $g\in G$ and the homeomorphism of $X$ it induces by the action, denoted still by $g$, the pushforward measure $g_*\mu$ is equal to $\mu$.\smallskip

Examples of amenable groups include all compact groups - in particular finite groups, abelian groups - in particular $\Int$. Prototypical examples of non-amenable groups are non-abelian free groups.\medskip

Powerful tools involving amenable groups that will prove instrumental in finding invariant strictly convex norms on $L_1[0,1]$ in this paper include the Ornstein-Weiss Rokhlin's lemma for amenable groups \cite{OrWei87} as well as the tiling machinery of Downarowicz, Huczek and Zhang \cite{DoHuZh19}. Amenability will also appear in the context of invariant strictly convex norms on $C(2^\Nat)$ applying the recent results from \cite{DoMeTs25}.

\subsection{Banach-Stone and Banach-Lamperti theorems}
The Banach-Stone and Banach-Lamperti theorems completely describe (positive) linear isometries of spaces of the form $C(K)$ and $L_p(\Omega,\mu)$, $p\in\left[1,\infty\right)\setminus\{2\}$, where $K$ is a compact Haudorff space and $(\Omega,\mu)$ is a measure space. As such, they will be invaluable instruments when investigating group actions on these Banach spaces.

Before we recall these theorems, we digress a bit to recall the semi-direct construction in group theory as the linear isometry groups of $C(K)$ and $L_p(\Omega,\mu)$ spaces, $\in\left[1,\infty\right)\setminus\{2\}$, are natural semi-direct product of groups.

\subsubsection{Semi-direct product}
Let $G$ and $H$ be topological groups and denote by $\Aut(H)$ the group of all automorphisms of $H$ with the compact-open topology. Suppose that $G$ admits a continuous action on $H$, i.e. there is a continuous homomorphism $\xi$ from $G$ into $\Aut(H)$. Then one can form a semi-direct product $G\ltimes H$, which as a topological space is homeomorphic to the direct product $G\times H$ and the group operation is defined by \[(g_1,h_1)\cdot (g_2,h_2):=(g_1g_2,\xi(g_2^{-1})h_1h_2),\quad g_1,g_2\in G,\; h_1,h_2\in H.\] Notice that both $G$ and $H$ topologically and algebraically embed into $G\ltimes H$ and we identify them with their canonical copies inside $G\ltimes H$. $H$ is then a closed normal subgroup of $G\ltimes H$ and $(G\ltimes H)/H=G$.

The most interesting examples for us are described below.
\subsubsection{Banach-Stone theorem}
Given a compact Hausdorff space $K$, denote by $\Homeo(K)$ the group of all homeomorphisms of $K$ equipped with the uniform convergence topology. Notice that given $\phi\in\Homeo(K)$ there is a positive linear isometry $T_\phi\in \LIso(C(K))$ defined by \[T_\phi(f)(x):=f\big(\phi^{-1}(x)\big),\quad f\in C(K),\; x\in K.\] The map $\phi\in\Homeo(K)\to T_\phi\in\LIso(C(K))\subseteq\Iso(C(K))$ is a continuous injective homomorphism.

Denote also by $C(K,\{-1,1\})$ the space of all continuous functions from $K$ into $\{-1,1\}$ with the compact-open topology, which form an abelian group under pointwise multiplication. For each $\phi\in C(K,\{-1,1\})$, there is $T_\phi\in\Iso(C(K))$ defined by \[T_\phi(f)(x):=\phi(x)f(x),\quad f\in C(K),\; x\in K.\] The map $\phi\to T_\phi$ is again a continuous injective homomorphism. The natural action $\xi$ of $\Homeo(K)$ on $C(K,\{-1,1\})$ defined by \[\xi(\phi,\psi)(x):=\psi\big(\phi^{-1}(x)\big),\quad \phi\in\Homeo(K),\;\psi\in C(K,\{-1,1\}),\; x\in K\] lets us define the semi-direct product $\Homeo(K)\ltimes C(K,\{-1,1\})$. The Banach-Stone theorem, see e.g. \cite[Theorem 2.1.1]{FleJabook1}, can be then interpreted that $\Iso(C(K))$ is equal (or topologically isomorphic) to $\Homeo(K)\ltimes C(K,\{-1,1\})$.

It also follows that one can then identify $\LIso(C(K))$ and $\Homeo(K)$.

\subsubsection{Banach-Lamperti theorem}
The Banach-Lamperti theorem applies to $L_p(\Omega,\mu)$, $p\in\left[1,\infty\right)\setminus\{2\}$, where $(\Omega,\mu)$ is a measure space, analogously as the Banach-Stone theorem applies to $C(K)$. For simplicity, we shall mention here only the case when $(\Omega,\mu)$ is $([0,1],\lambda)$, since that is all we need in this paper. This case was proved already by Banach in \cite[Chapter 11]{Ban55}.

Let $\phi:[0,1]\to[0,1]$ be a \emph{measure-class preserving} (also called \emph{non-singular}) measurable bijection. That is, $\phi$ preserves the equivalence class of $\lambda$, i.e. $\phi_*\lambda\ll\lambda$ and $\lambda\ll\phi_*\lambda$, where $\phi_*\lambda$ is the pushforward of $\lambda$ via $\phi$ and `$\ll$' denotes the absolute continuity between measures. There is a positive linear isometry $T_\phi:L_1[0,1]\to L_1[0,1]$ defined by \[T_\phi(f)(x):=\frac{d\phi_*\lambda}{d\lambda}(x)f\big(\phi^{-1}(x)\big),\quad f\in L_1[0,1],\;x\in [0,1],\] where $\frac{d\phi_*\lambda}{d\lambda}$ is the Radon-Nikod\'ym derivative of $\phi_*\lambda$ with respect to $\lambda$. Denote by $\Aut([0,1],\lambda)$ the group of all measure-class preserving measurable bijections modulo the congruence of two maps being congruent when they coincide up to measure zero. As in the case of elements of $L_1[0,1]$, we shall usually work with concrete representatives rather than equivalence classes. We equip $\Aut([0,1],\lambda)$ with the (group) topology generated by the subbasic open sets of the form $\big\{\psi\in\Aut([0,1],\lambda)\colon \lambda(\psi[A]\triangle\phi[A])<\varepsilon\big\}$, where $\phi\in\Aut([0,1],\lambda)$, $A\subseteq [0,1]$ is Lebesgue measurable, and $\varepsilon>0$. The map $\phi\in\Aut([0,1],\lambda)\to T_\phi\in\LIso(L_1[0,1])$ is an injective continuous homomorphism.

We also consider the group $M([0,1],\{-1,1\})$ of all measurable maps from $[0,1]$ into $\{-1,1\}$, modulo the congruence of two maps being congruent when they coincide up to measure zero, with pointwise multiplication and equipped with the coarsest topology that makes the map \[(\phi,\psi)\in M([0,1],\{-1,1\})^2\to \lambda\big(x\in[0,1]\colon \phi(x)\neq\psi(x)\big)\] continuous. There is again a natural continuous injective group homomorphism $\phi\in M([0,1],\{-1,1\})\to T_\phi$, where $T_\phi$ is defined by \[T_\phi(f)(x):=\phi(x)f(x),\quad f\in L_1[0,1],\;x\in[0,1].\]

As in the Banach-Stone case, there is a natural action $\xi$ of $\Aut([0,1],\lambda)$ on $M([0,1],\{-1,1\})$ defined by \[\xi(\phi,\psi)(x):=\psi\big(\phi^{-1}(x)\big),\quad \phi\in\Aut([0,1],\lambda),\;\psi\in M([0,1],\{-1,1\}),\; x\in [0,1]\] which again lets us define the semi-direct product $\Aut([0,1],\lambda)\ltimes M([0,1],\{-1,1\})$. The Banach-Lamperti theorem, see e.g. \cite[Theorem 3.2.5]{FleJabook1}, can be then interpreted that $\Iso(L_1[0,1])$ is equal (or topologically isomorphic) to $\Aut([0,1],\lambda)\ltimes M([0,1],\{-1,1\})$. It then also follows that we can identify $\LIso(L_1[0,1])$ and $\Aut([0,1],\lambda)$.

\section{General results}\label{sec:general}
Here we provide a characterization of $G$-Banach spaces admitting an equivalent $G$-invariant strictly convex norm. The characterization is rather simple, yet quite useful, and along with Proposition~\ref{prop:sup=max} from Section~\ref{sec:fulliso} will be our main tool in subsequent sections.
\begin{proposition}\label{prop:SCrenorming1}
Let $X$ be a $G$-Banach space. Then there exists a strictly convex $G$-invariant renorming if and only if there exist a strictly convex $G$-Banach space $Y$ and a bounded injective $G$-equivariant linear operator from $X$ to $Y$.
\end{proposition}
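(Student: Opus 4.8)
The forward implication will need no work: given a strictly convex $G$-invariant equivalent norm $\vertiii{\cdot}$ on $X$, I would take $Y:=(X,\vertiii{\cdot})$ — a Banach space carrying the same topology as $X$, so that the isometric $G$-action on $X$ is still a continuous isometric action on $Y$ and $Y$ is a strictly convex $G$-Banach space — together with the identity operator $X\to Y$, which is bounded by equivalence of norms, injective, and $G$-equivariant since both norms are $G$-invariant. Thus the whole content is in the converse.

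For the converse I would start from a strictly convex $G$-Banach space $Y$ and a bounded injective $G$-equivariant linear operator $T\colon X\to Y$, and define the candidate renorming
\[
\vertiii{x}:=\|x\|_X+\|Tx\|_Y .
\]
This is a norm, being the sum of $\|\cdot\|_X$ with the seminorm $x\mapsto\|Tx\|_Y$, and $\|x\|_X\le\vertiii{x}\le(1+\|T\|)\|x\|_X$ makes it equivalent to $\|\cdot\|_X$; the summand $\|\cdot\|_X$ is included precisely to keep this lower bound, since $\|T\cdot\|_Y$ alone need not be equivalent to $\|\cdot\|_X$. Its $G$-invariance is immediate: $\vertiii{g\cdot x}=\|g\cdot x\|_X+\|g\cdot Tx\|_Y=\|x\|_X+\|Tx\|_Y=\vertiii{x}$, using equivariance of $T$ and that $G$ acts isometrically on $X$ and on $Y$ for their defining norms.

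The one substantive step is to check that $\vertiii{\cdot}$ is strictly convex. I would take $x\ne y$ with $\vertiii{x}=\vertiii{y}$; this value is nonzero, and rescaling lets me assume it equals $1$. Assuming for contradiction $\vertiii{\tfrac{x+y}{2}}=1$, I add the two triangle inequalities $\|\tfrac{x+y}{2}\|_X\le\tfrac12(\|x\|_X+\|y\|_X)$ and $\|\tfrac{Tx+Ty}{2}\|_Y\le\tfrac12(\|Tx\|_Y+\|Ty\|_Y)$ to get $\vertiii{\tfrac{x+y}{2}}\le\tfrac12(\vertiii{x}+\vertiii{y})=1$, forcing equality in both; in particular $\|Tx+Ty\|_Y=\|Tx\|_Y+\|Ty\|_Y$. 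Since $T$ is injective and $x,y\ne 0$, both $Tx$ and $Ty$ are nonzero, so the standard fact that in a strictly convex space $\|u+v\|=\|u\|+\|v\|$ with $u,v\ne 0$ forces $u$ and $v$ to be positive multiples of one another gives $Ty=\lambda Tx$ for some $\lambda>0$; injectivity of $T$ upgrades this to $y=\lambda x$, and then $1=\vertiii{y}=\lambda\vertiii{x}=\lambda$ yields $y=x$, contradicting $x\ne y$. Hence $\vertiii{\tfrac{x+y}{2}}<1=\vertiii{x}$, and strict convexity follows.

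I do not anticipate a genuine obstacle. The only pitfall worth flagging is the temptation to realize the renorming via the embedding $x\mapsto(x,Tx)$ into the $\ell_2$-sum $X\oplus_2 Y$: this fails because $X\oplus_2 Y$ need not be strictly convex when $X$ is not, so one must instead use the $\ell_1$-style sum $\|x\|_X+\|Tx\|_Y$ and extract strict convexity from the $Y$-coordinate by hand, exploiting that $\|T\cdot\|_Y$ is an honest norm — not merely a seminorm — on $X$ precisely because $T$ is injective.
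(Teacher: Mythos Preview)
Your proof is correct and follows the same approach as the paper: the forward direction uses the identity into $(X,\vertiii{\cdot})$, and the converse defines the new norm as $\|x\|_X+\|Tx\|_Y$. The only difference is that you spell out the strict convexity verification in full, whereas the paper simply asserts that a sum of a $G$-invariant equivalent norm and a $G$-invariant strictly convex norm bounded by a multiple of the original is strictly convex; your argument via the equality case of the triangle inequality in $Y$ is the standard way to unpack that assertion.
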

\begin{proof}
Suppose there exists an equivalent $G$-invariant strictly convex norm $\|\cdot\|'$ on $X$. Then the identity map $\mathrm{id}:(X,\|\cdot\|_X)\to (X,\|\cdot\|')$ is the desired bounded injective $G$-equivariant linear operator.

Conversely, if there exist a strictly convex $G$-Banach space $Y$ and a bounded injective $G$-equivariant linear operator $T:X\to Y$ then \[\|x\|_0:=\|T(x)\|_Y\] defines a $G$-invariant strictly convex norm on $X$ staisfying $\|\cdot\|_0\leq \|T\|\|\cdot\|_X$ and $\|\cdot\|':=\|\cdot\|_X+\|\cdot\|_0$ is clearly equivalent $G$-invariant strictly convex norm on $X$, as a sum of two $G$-invariant norms on $X$ where one of them is equivalent and the other is strictly convex and bounded by a scalar multiple of the original norm.
\end{proof}
As a corollary we get that when there exists an equivalent invariant strictly convex norm, then it can be chosen arbitrarily close to the original norm. This is a quite useful observation that will be applied in the latter sections.
\begin{corollary}\label{cor:SCrenorming}
Let $X$ be a $G$-Banach space. If there exists a strictly convex $G$-invariant renorming on $X$ then there exists a $G$-invariant strictly convex norm $\vertiii{\cdot}$ such that $(1-\varepsilon)\|\cdot\|_X\leq \vertiii{\cdot}\leq (1+\varepsilon)\|\cdot\|_X$, for every $\varepsilon>0$.
\end{corollary}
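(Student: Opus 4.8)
The plan is to perturb the original norm of $X$ by a small multiple of an invariant strictly convex norm, exactly in the spirit of the proof of Proposition~\ref{prop:SCrenorming1}. Fix $\varepsilon>0$ and let $\|\cdot\|'$ be a $G$-invariant strictly convex norm on $X$ equivalent to $\|\cdot\|_X$, as provided by the hypothesis. By equivalence there is a constant $C>0$ with $\|x\|'\le C\|x\|_X$ for all $x\in X$, so that $\tfrac{\varepsilon}{C}\|\cdot\|'$ is a $G$-invariant strictly convex norm bounded above by $\varepsilon\|\cdot\|_X$.

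I would then set
\[
\vertiii{x}:=\|x\|_X+\frac{\varepsilon}{C}\,\|x\|'.
\]
Since it is a sum of two $G$-invariant norms, $\vertiii{\cdot}$ is $G$-invariant; since it is a sum of a norm and a strictly convex norm, it is strictly convex, which is the same observation used at the end of the proof of Proposition~\ref{prop:SCrenorming1}. (Concretely: if $\vertiii{x}=\vertiii{y}$ with $x\neq y$, then $\vertiii{\tfrac{x+y}{2}}\le\vertiii{x}$ by the triangle inequality, and equality would force $\|x+y\|'=\|x\|'+\|y\|'$; neither $x$ nor $y$ can be $0$, so strict convexity of $\|\cdot\|'$ gives $y=\lambda x$ with $\lambda>0$ and $x\neq 0$, and comparing $\vertiii{x}$ with $\vertiii{\lambda x}$ forces $\lambda=1$, a contradiction; hence $\vertiii{\tfrac{x+y}{2}}<\vertiii{x}$.) Finally, from $0\le\tfrac{\varepsilon}{C}\|\cdot\|'\le\varepsilon\|\cdot\|_X$ we read off $\|\cdot\|_X\le\vertiii{\cdot}\le(1+\varepsilon)\|\cdot\|_X$, which in particular yields $(1-\varepsilon)\|\cdot\|_X\le\vertiii{\cdot}\le(1+\varepsilon)\|\cdot\|_X$.

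There is essentially no obstacle here; the argument is routine once one realizes the right move. The only point to keep in mind is that one must first rescale the given strictly convex norm so that its contribution to the sum is controlled by $\varepsilon\|\cdot\|_X$ — the unrescaled sum $\|\cdot\|_X+\|\cdot\|'$ would only be comparable to $\|\cdot\|_X$, not within a factor $1+\varepsilon$. Note also that $\vertiii{\cdot}$ depends on $\varepsilon$: the statement asserts that such a norm exists for each $\varepsilon>0$, not that a single norm works simultaneously for all $\varepsilon$, which would be impossible unless $\|\cdot\|_X$ were already strictly convex.
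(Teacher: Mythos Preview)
Your proof is correct and follows essentially the same approach as the paper: both add a small rescaled multiple of a $G$-invariant strictly convex norm to $\|\cdot\|_X$. The paper routes this through Proposition~\ref{prop:SCrenorming1} to obtain an equivariant injection $\phi$ into a strictly convex space and sets $\vertiii{x}=\|x\|_X+\tfrac{\varepsilon}{\|\phi\|}\|\phi(x)\|_Y$, while you work directly with the given renorming (which amounts to taking $Y=(X,\|\cdot\|')$ and $\phi=\mathrm{id}$); the arguments are otherwise identical.
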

\begin{proof}
Suppose there exists a $G$-invariant strictly convex renorming and fix $\varepsilon>0$. Then by Proposition~\ref{prop:SCrenorming1} there are a strictly convex $G$-space $Y$ and a bounded injective $G$-equivariant linear operator $\phi:X\to Y$. We set \[\vertiii{x}:=\|x\|_X+\frac{\varepsilon \phi(x)}{\|\phi\|},\quad x\in X.\] Clearly, $\vertiii{\cdot}$ is strictly convex and $G$-invariant satisfying $(1-\varepsilon)\|\cdot\|_X\leq \vertiii{\cdot}\leq (1+\varepsilon)\|\cdot\|_X$.
\end{proof}

The next proposition is a variant of Proposition~\ref{prop:SCrenorming1} for separable Banach spaces which are of the main interest in this paper.
\begin{proposition}\label{prop:SCrenorming2}
Let $X$ be a separable Banach space and $G$ be a topological group acting continuously on $X$ by isometries. There exists a strictly convex $G$-invariant renorming of $X$ if and only if there are a countable dense sequence $(x_n)_{n\in\Nat}\subseteq X$, a constant $C>0$, and linear $G$-equivariant maps $T_n:X\to Z_n$, for $n\in\Nat$, such that
\begin{itemize}
	\item $Z_n$ is a strictly convex Banach space equipped with an action of $G$ by isometries,
	\item  $\|T_n\|\leq 1$,
	\item $\|T_n(x_n)\|\geq C\|x_n\|$.
\end{itemize}
\end{proposition}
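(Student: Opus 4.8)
The plan is to reduce both implications to Proposition~\ref{prop:SCrenorming1}. For the forward implication, suppose $\vertiii{\cdot}$ is an equivalent $G$-invariant strictly convex norm, say $a\|\cdot\|_X\le\vertiii{\cdot}\le b\|\cdot\|_X$ with $a,b>0$. Then I would take $Z_n:=(X,\tfrac1b\vertiii{\cdot})$ for every $n\in\Nat$, which is again a strictly convex $G$-Banach space (the action is by isometries for $\vertiii{\cdot}$, hence for $\tfrac1b\vertiii{\cdot}$, and it is continuous because this norm is equivalent to $\|\cdot\|_X$), let $T_n:=\mathrm{id}_X$ so that $\|T_n\|=\sup_{\|x\|_X\le1}\tfrac1b\vertiii{x}\le1$, pick any countable dense sequence $(x_n)_{n\in\Nat}\subseteq X$, and set $C:=a/b$; then $\|T_n(x_n)\|_{Z_n}=\tfrac1b\vertiii{x_n}\ge C\|x_n\|_X$, as required.

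For the converse I would assemble the $T_n$ into a single operator. Let $Y:=\bigoplus_{\ell_2}Z_n$, which is strictly convex by the fact recalled in Section~\ref{sec:prelim} (the $\ell_p$-sum of strictly convex spaces is strictly convex for $1<p<\infty$), and equip it with the diagonal $G$-action $g\cdot(y_n)_n:=(g\cdot y_n)_n$; this action is by isometries, and it is continuous since for fixed $y=(y_n)_n\in Y$ one splits $\|g\cdot y-y\|^2=\sum_n\|g\cdot y_n-y_n\|^2$ into a finite head, controlled by continuity of the actions on $Z_1,\dots,Z_N$, and a tail, controlled by $\sum_{n>N}\|y_n\|^2$. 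Define $T:X\to Y$ by $T(x):=(2^{-n}T_n(x))_n$. Because $\|T_n\|\le1$ we have $\|T(x)\|^2=\sum_n 4^{-n}\|T_n(x)\|^2\le\big(\sum_n 4^{-n}\big)\|x\|_X^2$, so $T$ is well defined and bounded; it is linear, and since each $T_n$ is $G$-equivariant, so is $T$.

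The one genuine point is injectivity of $T$, and this is where I expect the argument to actually use the hypotheses: the data only controls $T_n$ on the single vector $x_n$, so injectivity is not formal. Suppose $T(x)=0$, i.e.\ $T_n(x)=0$ for every $n$. By density choose a subsequence $x_{n_k}\to x$. Then, using $T_{n_k}(x)=0$ and $\|T_{n_k}\|\le1$,
\[C\|x_{n_k}\|_X\ \le\ \|T_{n_k}(x_{n_k})\|_{Z_{n_k}}\ =\ \|T_{n_k}(x_{n_k}-x)\|_{Z_{n_k}}\ \le\ \|x_{n_k}-x\|_X\ \longrightarrow\ 0,\]
so $\|x_{n_k}\|_X\to0$ and hence $x=0$. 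Thus $T$ is a bounded injective $G$-equivariant operator from $X$ into the strictly convex $G$-Banach space $Y$, and Proposition~\ref{prop:SCrenorming1} produces the desired $G$-invariant strictly convex renorming. The main obstacle is therefore exactly this injectivity step — everything else is bookkeeping — and the idea is the "diagonal" trick of letting the index $n$ track an approximating sequence and using the uniform contraction bound to upgrade the pointwise estimates $\|T_n(x_n)\|\ge C\|x_n\|$ to injectivity of the assembled map.
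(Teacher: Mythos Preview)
Your proof is correct and follows essentially the same approach as the paper's: both directions reduce to Proposition~\ref{prop:SCrenorming1}, and for the converse you construct the same operator $T(x)=(2^{-n}T_n(x))_n$ into $\bigoplus_{\ell_2}Z_n$. The only cosmetic difference is in the injectivity step, where the paper argues directly (for $x\neq0$ pick $n$ with $\|x-x_n\|<C\|x_n\|$ and conclude $\|T_n(x)\|\geq\|T_n(x_n)\|-\|T_n(x-x_n)\|>0$) while you argue by contrapositive; the underlying estimate is identical.
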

\begin{proof}
The forward direction is clear and proved analogously as the forward direction of Proposition~\ref{prop:SCrenorming1}.

Conversely, set \[Z:=\bigoplus_{\ell_2} Z_n,\] which is strictly convex as an $\ell_2$-sum of strictly convex spaces. Further, we set $T:X\to Z$ by \[x\to \big(\frac{T_n(x)}{2^n}\big)_{n\in\Nat}\] which is clearly a $G$-equivariant operator of norm at most $1$. We claim that $T$ is injective. Indeed, pick any non-zero $x\in X$ and then choose $n\in\Nat$ so that $\|x-x_n\|<C\|x_n\|$. Then $\|T_n(x)\|\geq \|T_n(x_n)\|-\|T_n(x-x_n)\|>0$, thus $T_n(x)\neq 0$ and in particular $T(x)\neq 0$.

The statement then follows from Proposition~\ref{prop:SCrenorming1}.
\end{proof}

\section{Actions of the full isometry groups}\label{sec:fulliso}
Given a Banach space $X$, the strongest requirement in our setting we can ask for is whether there is an invariant strictly convex renorming with respect to any action of any group. This problem was considered for locally uniformly convex norms by Ferenczi and Rosendal, resp. with their coauthors, in \cite{FR11,FR13}, resp. in \cite{AFGR19}. We have already mentioned Theorem~\ref{thm:Lancien} of Lancien in this direction.

Here we show that more can be proved for the weaker condition of admitting an invariant strictly convex renorming and that this property of Banach spaces has several closure properties. We also answer positively an open question of Antunes, Ferenczi, Grivaux and Rosendal \cite[Question 8.1]{AFGR19}.

First we give a name to the condition we are after.
\begin{definition}
Say that a Banach space $X$ has \emph{group-invariant strictly convex renormable property (GSCR)} if for every group $G$ and every continuous action $G\curvearrowright X$ by isometries there is a $G$-invariant strictly convex renorming of $X$.
\end{definition}

\begin{remark}
A Banach space $X$ has GSCR if there is a $G$-invariant strictly convex renorming of $X$ for $G=\Iso(X)$ with the canonical action.
\end{remark}
The next proposition is along with the results from Section~\ref{sec:general} the only tool we have for finding invariant strictly convex renormings.
\begin{proposition}\label{prop:sup=max}
	Let $G$ be a topological group acting continuously by linear isometries on a Banach space $X$. Suppose that $X$ admits a strictly convex norm $|\cdot|$. If in the definition \[\vertiii{x}:=\sup_{g\in G}|gx|,\quad x\in X\] the supremum is attained for every $x\in X$, then $\vertiii{\cdot}$ is a $G$-invariant strictly convex norm.
\end{proposition}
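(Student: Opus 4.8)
The plan is to check three things in turn: that $\vertiii{\cdot}$ is a well-defined equivalent norm, that it is $G$-invariant, and that it is strictly convex, the last being the only point of substance.

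First I would observe that $\vertiii{\cdot}$ is a supremum of the norms $x\mapsto |gx|$ (each is a norm, being the composition of the linear bijection $g$ with the norm $|\cdot|$), hence is a seminorm. Since $|\cdot|$ is equivalent to $\|\cdot\|_X$, fix $a,b>0$ with $a\|x\|_X\le |x|\le b\|x\|_X$ for all $x$. As each $g\in G$ is a $\|\cdot\|_X$-isometry, $|gx|\le b\|gx\|_X=b\|x\|_X$, so $\vertiii{x}\le b\|x\|_X<\infty$; and taking $g=1_G$ gives $\vertiii{x}\ge|x|\ge a\|x\|_X$. Thus $\vertiii{\cdot}$ is finite-valued and bounded below by a positive multiple of $\|\cdot\|_X$, so it is a norm equivalent to $\|\cdot\|_X$. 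For $G$-invariance, for $h\in G$ one has $\vertiii{hx}=\sup_{g\in G}|ghx|=\sup_{g'\in G}|g'x|=\vertiii{x}$, since $g\mapsto gh$ is a bijection of $G$.

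The main step is strict convexity, and the idea is to use the attainment hypothesis to transfer strict convexity of $|\cdot|$ at a single well-chosen point to $\vertiii{\cdot}$. Suppose $x\ne y$ with $\vertiii{x}=\vertiii{y}=r$. By the triangle inequality $\vertiii{\tfrac{x+y}{2}}\le r$; I argue by contradiction that equality is impossible. If $\vertiii{\tfrac{x+y}{2}}=r$, then by the assumed attainment there is $g\in G$ with $\bigl|g\tfrac{x+y}{2}\bigr|=r$, i.e. $\bigl|\tfrac{gx+gy}{2}\bigr|=r$. On the other hand $|gx|\le\vertiii{x}=r$ and $|gy|\le\vertiii{y}=r$, so
\[
r=\Bigl|\tfrac{gx+gy}{2}\Bigr|\le\tfrac{|gx|+|gy|}{2}\le r,
\]
forcing $|gx|=|gy|=r$. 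Now $|gx|=|gy|$ and $\bigl|\tfrac{gx+gy}{2}\bigr|=r=|gx|$, so strict convexity of $|\cdot|$ forces $gx=gy$; since $g$ is injective, $x=y$, contradicting $x\ne y$. Hence $\vertiii{\tfrac{x+y}{2}}<r$, which is strict convexity of $\vertiii{\cdot}$.

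I do not expect a genuine obstacle here: the only point requiring care is the equality-case analysis of the triangle inequality for $|\cdot|$ (deducing $|gx|=|gy|=r$), after which strict convexity of $|\cdot|$ and injectivity of $g$ finish the argument. The role of the attainment hypothesis is precisely to produce a single group element witnessing the would-be failure of strict convexity, so that no passage to a limit — which could destroy strict inequalities — is needed.
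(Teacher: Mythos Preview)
Your proof is correct and follows essentially the same route as the paper: attain the supremum at some $g$, then use strict convexity of $|\cdot|$ at the pair $(gx,gy)$ to derive a contradiction. The paper's version is terser --- it writes directly $\vertiii{x}+\vertiii{y}\ge |gx|+|gy|>2$ without spelling out the equality-case analysis you give --- but the underlying idea is identical; your additional verification that $\vertiii{\cdot}$ is an equivalent norm is a useful detail the paper leaves implicit.
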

\begin{proof}
	Suppose the supremum is attained. By definition, the norm $\vertiii{\cdot}$ is $G$-invariant so we just check it is strictly convex. Pick $x\neq y\in X$ such that $\vertiii{x}=\vertiii{y}=1$ and suppose that $\vertiii{\frac{x+y}{2}}=1$. Let $g\in G$ be such that $|\frac{gx+gy}{2}|=1$. Then, since $|\cdot|$ is strictly convex \[\vertiii{x}+\vertiii{y}\geq |gx|+|gy|>2,\]a contradiction.
\end{proof}
As an immediate corollary we recover the following results from \cite{AFGR19}.
\begin{corollary}\label{cor:c0}\emph{}\\\begin{enumerate}
		\item For every Banach space admitting a strictly convex norm and a compact $G$ acting on $X$ by linear isometries, there is a $G$-invariant strictly convex norm on $X$ (see \cite[Proposition 2.1]{AFGR19}) .
		\item The Banach space $c_0$ has GSCR (see \cite[Example 4.2]{AFGR19}).
	\end{enumerate}
\end{corollary}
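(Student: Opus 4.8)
We dispose of the two items by verifying, in each case, the hypothesis of Proposition~\ref{prop:sup=max}. For item~(1), let $X$ carry an equivalent strictly convex norm $|\cdot|$ and let the compact group $G$ act on $X$ by linear isometries of the original norm $\|\cdot\|_X$. Fix $x\in X$. The orbit map $g\mapsto g\cdot x$ is continuous from $G$ into $(X,\|\cdot\|_X)$ by continuity of the action, and $|\cdot|$ is a $\|\cdot\|_X$-continuous function on $X$ because it is equivalent to $\|\cdot\|_X$; hence $g\mapsto |g\cdot x|$ is a continuous real function on the compact space $G$ and attains its supremum. Proposition~\ref{prop:sup=max} then gives that $\vertiii{x}:=\sup_{g\in G}|g\cdot x|$ is a $G$-invariant strictly convex norm, and writing $a\|\cdot\|_X\le|\cdot|\le b\|\cdot\|_X$ one has $a\|x\|_X\le|x|\le\vertiii{x}\le b\sup_{g}\|g\cdot x\|_X=b\|x\|_X$, so it is equivalent. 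This recovers \cite[Proposition 2.1]{AFGR19}.

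For item~(2), by the Remark after the definition of GSCR it suffices to exhibit an $\Iso(c_0)$-invariant strictly convex renorming of $c_0$. Since the surjective linear isometries of $c_0$ are exactly the signed permutations of the unit vector basis, i.e.\ the maps $(x_n)_n\mapsto(\varepsilon_n x_{\sigma(n)})_n$ with $\sigma$ a permutation of $\Nat$ and $\varepsilon\in\{-1,1\}^{\Nat}$ (classical; the adjoint of an isometry permutes the extreme points $\{\pm e_n\}$ of $B_{c_0^*}=B_{\ell_1}$), an $\Iso(c_0)$-invariant norm is precisely a symmetric one. The plan is to use the Lorentz-type norm
\[\vertiii{x}:=\Big(\sum_{n=1}^{\infty}(x_n^*)^2\,2^{-n}\Big)^{1/2},\qquad x\in c_0,\]
where $(x_n^*)_n\in c_0$ is the non-increasing rearrangement of $(|x_n|)_n$. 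It is symmetric, hence $\Iso(c_0)$-invariant, so the supremum in Proposition~\ref{prop:sup=max} (applied with $|\cdot|=\vertiii{\cdot}$) is attained at the identity; and since $x_1^*=\|x\|_\infty$, $x_n^*\le x_1^*$ and $\sum_n 2^{-n}=1$, one has $2^{-1/2}\|x\|_\infty\le\vertiii{x}\le\|x\|_\infty$, so $\vertiii{\cdot}$ is equivalent to $\|\cdot\|_\infty$. Thus the only thing left to prove is that $\vertiii{\cdot}$ is strictly convex.

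The route I would take for strict convexity is as follows. Write $\|u\|_w:=(\sum_n u_n^2\,2^{-n})^{1/2}$ for the weighted $\ell_2$-norm, which is strictly convex (it is the $\ell_2$-norm of $(u_n 2^{-n/2})_n$). The Hardy--Littlewood inequality $\sum_{n\le k}(x+y)_n^*\le\sum_{n\le k}x_n^*+\sum_{n\le k}y_n^*$ for all $k$, together with the fact that $\|\cdot\|_w$ is non-decreasing under weak majorization on non-increasing non-negative sequences (a consequence of convexity of $t\mapsto t^2$ and an Abel summation using that the weights $2^{-n}$ decrease), yields the chain $\vertiii{x+y}=\|(x+y)^*\|_w\le\|x^*+y^*\|_w\le\|x^*\|_w+\|y^*\|_w=\vertiii{x}+\vertiii{y}$, which in particular shows $\vertiii{\cdot}$ is a norm. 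Suppose now $x\ne y$ with $\vertiii{x}=\vertiii{y}=1=\vertiii{\tfrac{x+y}{2}}$, so equality holds throughout the chain. Equality in the last inequality plus strict convexity of $\|\cdot\|_w$ forces $x^*=y^*$. Then $x^*+y^*=2x^*$, and equality in $\|(x+y)^*\|_w=\|2x^*\|_w$ combined with the weak-majorization relation $\sum_{n\le k}(x+y)_n^*\le\sum_{n\le k}2x_n^*$ and the strict decrease of the weights forces, by Abel summation, $(x+y)^*=2x^*=x^*+y^*$. Finally a level-by-level analysis finishes: $(x+y)_1^*=\|x\|_\infty+\|y\|_\infty$ forces $\|x+y\|_\infty$ to be attained at some coordinate where $|x|$ and $|y|$ both equal $\|x\|_\infty$ with the same sign, and comparing multiplicities of the top value (all finite, as $x\in c_0$) forces this at \emph{every} such coordinate; iterating down the distinct values of $x^*$ gives $x_n=y_n$ at every coordinate with $x_n\ne0$, and then $x^*=y^*$ forces $y$ to vanish wherever $x$ does, i.e.\ $x=y$ --- a contradiction. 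This recovers \cite[Example 4.2]{AFGR19}.

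I expect this last, level-by-level strict-convexity verification to be the only genuine obstacle, its delicacy stemming from the fact that the non-increasing rearrangement of a $c_0$-sequence need not be realized by an honest permutation of $\Nat$ (precisely when there are infinitely many non-zero and at least one zero coordinate), so one must argue with the abstract rearrangement and the weak-majorization inequalities rather than with the isometries $g$ themselves. Equivalently, one may feed the completion of $(c_0,\vertiii{\cdot})$ together with the formal identity into Proposition~\ref{prop:SCrenorming1}; the combinatorial content is unchanged.
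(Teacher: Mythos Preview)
Your treatment of (1) coincides with the paper's: compactness of $G$ makes $g\mapsto|gx|$ attain its supremum, so Proposition~\ref{prop:sup=max} applies directly.

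For (2) you take a different and more careful route. The paper applies Proposition~\ref{prop:sup=max} with $|x|=\|x\|_{c_0}+\big(\sum_i x_i^2/4^i\big)^{1/2}$ and asserts that $\sup_{g\in\Iso(c_0)}|gx|$ is attained at a signed permutation $h$ that rearranges $x$ into non-increasing absolute values. But, exactly as you diagnose in your last paragraph, such an $h\in\Iso(c_0)$ need not exist when $x$ has infinitely many non-zero coordinates together with at least one zero coordinate; in that case the supremum --- which equals $\|x\|_\infty+\big(\sum_i (x_i^*)^2/4^i\big)^{1/2}$ --- is genuinely not attained by any element of $\Iso(c_0)$, so Proposition~\ref{prop:sup=max} is not directly applicable as written. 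Your approach bypasses this by working with the already-symmetric Lorentz norm $\vertiii{x}=\big(\sum_n (x_n^*)^2\,2^{-n}\big)^{1/2}$ and proving its strict convexity directly: the chain $\|(x+y)^*\|_w\le\|x^*+y^*\|_w\le\|x^*\|_w+\|y^*\|_w$ (Hardy--Littlewood plus Abel summation, using that the weights are strictly decreasing and $t\mapsto t^2$ is convex increasing) and the level-by-level multiplicity analysis are both correct. In effect the paper's supremum formula still defines a valid strictly convex $\Iso(c_0)$-invariant norm --- it is $\|x\|_\infty$ plus a Lorentz norm with weights $4^{-i}$ --- but it is your argument, not Proposition~\ref{prop:sup=max}, that actually justifies it.
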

\begin{proof}
The former is clear since if $G$ is compact the supremum in the statement of Proposition~\ref{prop:sup=max} is attained by compactness.

For $c_0$, consider the following strictly convex norm. For $x\in c_0$ set \[|x|:=\|x\|_{c_0}+\Big(\sum_{i=1}^\infty \frac{x_i^2}{4^i}\Big)^{1/2}.\] Let $G=\Iso(c_0)$ and set \[\vertiii{x}:=\sup_{g\in G}|gx|,\quad x\in X.\] We show that the supremum is attained, thus Proposition~\ref{prop:sup=max} applies. Pick $x\in c_0$ and let $h\in G$ acts as a permutation of $\Nat$ that rearranges $x$ so that $hx$ is a decreasing sequence; that is, for every $i\in\Nat$, $|hx_i|\geq |hx_{i+1}|$. It is then clear that $\sup_{g\in G} |gx|=|hx|$. This finishes the proof.
\end{proof}

Another corollary shows that the class of Banach spaces with GSCR is closed under $c_0$-sums - under some conditions on the spaces in the sum.

We first need the following proposition.

\begin{proposition}\label{prop:iso-of-l1-c0-sums}
	Let $(X_n)_{n\in\Nat}$ be a sequence of separable Banach spaces.
	\begin{enumerate}
		\item If none of them can be decomposed as an $\ell_1$-sum of two proper subspaces, then every surjective linear isometry of $\bigoplus_{\ell_1} X_n$ is given by surjective linear isometries of individual summands and permutation that exchange isometric summands.
		\item If none of them can be decomposed as an $\ell_\infty$-sum of two proper subspaces, then every surjective linear isometry of $\bigoplus_{c_0} X_n$ is given by surjective linear isometries of individual summands and permutation that exchange isometric summands.
	\end{enumerate}

	That is, in both cases, let $I$ be a finite or countably infinite set of isometry class of spaces $(X_n)_{n\in\Nat}$. For each $i\in I$, let $C_i:=\{n\in\Nat\colon X_n\text{ belongs to the }i\text{-th isometry class}\}$ and let $G:=\prod_{i\in I}S_{C_i}\leq S_\Nat$ be the corresponding closed subgroup of the full permutation group of $\Nat$ that setwise fixes each $C_i$. Then $\Iso(\bigoplus_{\ell_1} X_n)=G\ltimes \prod_{n\in\Nat} \Iso(X_n)$.
\end{proposition}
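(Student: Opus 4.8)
The plan is to treat the two cases in parallel, since the argument is formally the same once one has the right uniqueness-of-decomposition statement for the summands. Fix a surjective linear isometry $U$ of $Z := \bigoplus_{\ell_1} X_n$ (respectively $\bigoplus_{c_0} X_n$). The key structural fact is that the canonical copies $X_n \hookrightarrow Z$ are precisely the indecomposable "atoms" of the $\ell_1$- (resp. $\ell_\infty$-) structure of $Z$, and that $U$ must permute them. First I would introduce the canonical norm-one projections $P_n : Z \to X_n$ and their complements, and observe that $Z = X_n \oplus_{\ell_1} (\bigoplus_{\ell_1, m \neq n} X_m)$ is an $\ell_1$-decomposition; by hypothesis each $X_m$ is $\ell_1$-indecomposable. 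The crux is then a lemma along the following lines: if $Z = \bigoplus_{\ell_1} X_n$ with every $X_n$ indecomposable, then the family of subspaces $\{X_n\}$ is, up to the obvious relabeling, the \emph{unique} way to write $Z$ as an $\ell_1$-sum of indecomposable subspaces; equivalently, every $\ell_1$-decomposition of $Z$ refines to $\{X_n\}$. Granting this, since $U$ carries an $\ell_1$-decomposition of $Z$ to another $\ell_1$-decomposition, $U$ must send each $X_n$ isometrically onto some $X_{\pi(n)}$; as $U$ is bijective, $\pi$ is a permutation of $\Nat$, and it can only exchange summands that are linearly isometric, i.e. $\pi \in G = \prod_{i \in I} S_{C_i}$. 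Writing $V_n := U|_{X_n} : X_n \to X_{\pi(n)}$, one reads off that $U = \pi \ltimes (V_n)_n$ in the notation of the statement, and conversely every such pair manifestly gives a surjective isometry of $Z$; this yields the claimed identification $\Iso(Z) = G \ltimes \prod_n \Iso(X_n)$, and one checks it is also a topological isomorphism for the strong operator topologies.

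For the uniqueness lemma in the $\ell_1$ case, I would argue via extreme points / the facial structure of the unit ball, or more directly via the $L$-projection structure: in an $\ell_1$-sum, the coordinate projections $P_n$ are precisely the \emph{$L$-projections} on $Z$ (those $P$ with $\|z\| = \|Pz\| + \|z - Pz\|$ for all $z$), and the set of $L$-projections forms a complete Boolean algebra (the Cunningham algebra) that does not depend on the chosen decomposition. The indecomposability hypothesis says exactly that the atoms of this Boolean algebra are the $P_n$, and a decomposition of $Z$ as an $\ell_1$-sum corresponds to a partition of unity in this algebra into $L$-projections; hence any such decomposition into indecomposable pieces must consist of the atoms, i.e. coincides with $\{X_n\}$ up to permutation. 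A surjective isometry $U$ conjugates $L$-projections to $L$-projections (since the defining identity is isometric), hence permutes the atoms, which is what we need. The $c_0$ case is dual: in an $\ell_\infty$-sum the coordinate projections are the \emph{$M$-projections} ($\|z\| = \max(\|Pz\|, \|z - Pz\|)$), these again form a Boolean algebra preserved by surjective isometries, and the $\ell_\infty$-indecomposability of the $X_n$ identifies the atoms with the summands; one must additionally note that for $\bigoplus_{c_0} X_n$ the $M$-projections with separable range are exactly finite unions of the $P_n$, and a careful compactness/ultrafilter argument is needed to rule out "projections at infinity", but the permutation conclusion survives.

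The main obstacle I expect is precisely this last point in the $c_0$ case: unlike the $\ell_1$ case, where the $L$-projection Boolean algebra is automatically complete and its atoms exhaust it, the $M$-structure of a $c_0$-sum has a more delicate behavior "at infinity" (the $M$-ideals of $\bigoplus_{c_0} X_n$ correspond to arbitrary subsets of $\Nat$, not just finite/cofinite ones, so the Boolean algebra of $M$-projections is not atomic in the naive sense). I would handle this by restricting attention to $M$-projections of \emph{small} (finite or separable, and in fact rank controlled on each coordinate) support — using that $U$ of a $c_0$-sum preserves the subspace of elements with finite support "in a controlled way", or equivalently by localizing: for each fixed $n$ and each $0 \neq x \in X_n$, the element $Ux$ lives in $c_0$-sum, hence has support spread over finitely many $X_m$ up to small error, and then an indecomposability + rigidity argument forces $Ux \in X_{\pi(n)}$ exactly. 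Making this localization argument clean is the real work; the rest is bookkeeping.
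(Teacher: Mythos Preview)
Your approach is essentially the paper's: it too introduces the Boolean algebra of $L_1$-projections $\mathcal{P}_1(\mathcal{X})$ (complete) and of $L_\infty$-projections $\mathcal{P}_\infty(\mathcal{X})$, observes that a surjective isometry $\phi$ induces an automorphism $P\mapsto \phi P\phi^{-1}$ of this algebra, and uses the indecomposability hypothesis to identify the atoms with the coordinate projections $P_n$, which are therefore permuted.

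Where you diverge is in the $c_0$ case, and here you are anticipating an obstacle that does not exist. You worry that the $M$-structure may be non-atomic ``at infinity'' and propose a localization/compactness workaround; the paper dispatches this in one line. The point you are missing is that any two $M$-projections commute (this is part of the Boolean-algebra statement), so for an arbitrary $M$-projection $P$ on $\bigoplus_{c_0} X_n$ each product $PP_n$ is an $M$-projection on $X_n$, hence by indecomposability $PP_n\in\{0,P_n\}$; thus $P=P_A$ for $A=\{n:PP_n=P_n\}$. The Boolean algebra $\mathcal{P}_\infty(\mathcal{X})$ is therefore the full power set of $\Nat$, which is atomic with atoms exactly the $P_n$, and any Boolean-algebra automorphism permutes atoms. (Your remark that ``$M$-ideals correspond to arbitrary subsets of $\Nat$'' actually points toward atomicity, not away from it; and note that $M$-ideals are in general strictly more numerous than ranges of $M$-projections, so the distinction matters.) No ultrafilter or finite-support argument is needed; the paper simply says the atoms are the $P_n$ and are preserved.
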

\begin{remark}
The semi-direct product $G\ltimes \prod_{n\in\Nat} \Iso(X_n)$ is defined with respect to the natural permutation action of $G$ on $\prod_{n\in\Nat} \Iso(X_n)$.
\end{remark}
\begin{proof}
	For a Banach space $X$, call a projection $P:X\to X$ an \emph{$L_1$-projection} if \[\|x\|=\|P(x)\|+\|x-P(x)\|,\quad x\in X\] and call it \emph{$L_\infty$-projection} if \[\|x\|=\max\big\{\|P(x)\|,\|x-P(x)\|\big\},\quad x\in X.\]
	
	The $L_1$-projections, resp. $L_\infty$ form a complete Boolean algebra, resp. a Boolean algebra (see \cite[Propositions 1.5 and 1.6]{Beh77}), denoted further by $\PP_1(X)$, resp. $\PP_\infty(X)$. Any surjective linear isometry $\phi:X\to X$ induces a complete Boolean algebra isomorphism $\phi_*:\PP_1(X)\to\PP_1(X)$ by the formula \[P\in\PP_1(X)\to \phi\circ P\circ \phi^{-1}\in\PP_1(X).\] Analogously, every surjective linear isometry $\phi:X\to X$ induces a Boolean algebra isomorphism $\phi_*:\PP_\infty(X)\to\PP_\infty(X)$.\medskip
	
	Suppose (1). If $(X_n)_{n\in\Nat}$ are as in the statement then for $\XX:=\bigoplus_{\ell_1} X_n$ we have that $\PP_1(\XX)$ is generated by the set of its atoms which consist of projections onto individual summands. If $\phi:\XX\to\XX$ is any surjective linear isometry, then $\phi_*:\PP_1(\XX)\to\PP_1(\XX)$ is a complete Boolean algebra isomorphism which therefore maps the atoms onto the atoms which further implies that $\phi$ maps individual summands onto individual summands. This proves (1).
	
	For (2) we proceed analogously. For $\XX:=\bigoplus_{c_0} X_n$ we have $\PP_\infty(\XX)$ is not complete, still its atoms are precisely the projections onto individual summands which are still preserved by every surjective linear isometry. We conclude as for (1).
\end{proof}

\begin{theorem}\label{thm:closurec0sums}
The GSCR property is closed under countable $c_0$-sums for spaces that do not admit decomposition as an $\ell_\infty$-sum of two proper subspaces.
\end{theorem}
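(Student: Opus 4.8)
The plan is to invoke the remark after the definition of GSCR and only produce an $\Iso(X)$-invariant strictly convex norm on $X:=\bigoplus_{c_0}X_n$ for the canonical action. The point of the hypothesis is that it lets us read off the isometry group from Proposition~\ref{prop:iso-of-l1-c0-sums}(2): $\Iso(X)=G_0\ltimes\prod_n\Iso(X_n)$, where $G_0=\prod_{i\in I}S_{C_i}$ permutes coordinates within each isometry class $C_i$ of the summands. Thus a typical $\phi\in\Iso(X)$ acts by applying a surjective linear isometry $v_n\colon X_n\to X_{\sigma(n)}$ in each coordinate and then permuting the coordinates by some $\sigma\in G_0$.

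First I would build a family of strictly convex norms $|\cdot|_n$ on the $X_n$ that is \emph{coherent across isometric summands}. Pick a model $M_i$ in each isometry class $i\in I$; since $M_i$ has GSCR, Corollary~\ref{cor:SCrenorming} provides an $\Iso(M_i)$-invariant strictly convex norm $N_i$ with $\tfrac12\|\cdot\|\le N_i\le\tfrac32\|\cdot\|$. For $n\in C_i$ put $|y|_n:=N_i(u_n^{-1}y)$ for some (equivalently, any) surjective linear isometry $u_n\colon M_i\to X_n$; the $\Iso(M_i)$-invariance of $N_i$ makes this independent of the choice of $u_n$ and yields the key coherence identity $|vy|_m=|y|_n$ for every surjective linear isometry $v\colon X_n\to X_m$ between summands (which forces $m,n$ into the same class). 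Because all summands in a class share one model, the bounds $\tfrac12\|\cdot\|_{X_n}\le|\cdot|_n\le\tfrac32\|\cdot\|_{X_n}$ hold uniformly in $n$.

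Next I would equip $X$ with
\[
|x|:=\sup_n |x_n|_n+\Big(\sum_n\tfrac{|x_n|_n^2}{4^n}\Big)^{1/2}.
\]
The first summand is an equivalent norm (uniform bounds) and, by coherence, is exactly $\Iso(X)$-invariant; the second is a strictly convex norm, being the pullback of the norm of the strictly convex space $\bigoplus_{\ell_2}(X_n,|\cdot|_n)$ under the injective linear map $x\mapsto(2^{-n}x_n)_n$, and it is bounded by a multiple of $\|\cdot\|_{c_0}$ and invariant under $\prod_n\Iso(X_n)$ but not under $G_0$. So $|\cdot|$ is an equivalent strictly convex norm on $X$. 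Finally I would apply Proposition~\ref{prop:sup=max} to $\vertiii{x}:=\sup_{\phi\in\Iso(X)}|\phi x|$: using the invariance of the first summand and the identity $\sum_n|(\phi x)_n|_n^2/4^n=\sum_n|x_n|_n^2/4^{\sigma(n)}$ for $\phi$ with permutation part $\sigma$, it remains to check that $\sup_{\sigma\in G_0}\sum_n|x_n|_n^2\,4^{-\sigma(n)}$ is attained. Since $x\in\bigoplus_{c_0}X_n$, the scalars $a_n:=|x_n|_n^2$ tend to $0$, so inside each class $C_i$ they admit a decreasing rearrangement; by the rearrangement inequality the class-wise supremum is attained by the permutation $\sigma_i$ of $C_i$ matching the largest $a_n$ with the largest weights $4^{-m}$, and $\sigma:=(\sigma_i)_{i\in I}\in G_0$ attains the global one. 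Hence the supremum defining $\vertiii{\cdot}$ is attained at an honest element of $\Iso(X)$, so by Proposition~\ref{prop:sup=max} $\vertiii{\cdot}$ is an $\Iso(X)$-invariant (equivalent) strictly convex norm, and $X$ has GSCR.

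I expect the last step to be the crux. A naive weighted $\ell_2$- or $c_0$-sum of the individual invariant strictly convex renormings is not permutation invariant, and passing to the $\Iso(X)$-supremum rescues the situation only because (i) the $\sup$-type first term is \emph{exactly} invariant — which is precisely what forces the coherent construction of the $|\cdot|_n$ from common models — and (ii) the $c_0$-decay of the coordinates together with the rearrangement inequality makes the supremum over the possibly infinite group $G_0$ genuinely attained rather than merely approximated, so that Proposition~\ref{prop:sup=max} is applicable. The hypothesis on the summands enters only to pin down $\Iso(X)$ via Proposition~\ref{prop:iso-of-l1-c0-sums}.
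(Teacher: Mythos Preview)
Your proposal is correct and follows essentially the same route as the paper: build coherent $\Iso(X_n)$-invariant strictly convex norms on the summands (so that isometries between summands preserve them), form a weighted $\ell_2$-type perturbation to get an equivalent strictly convex norm on $\XX$, and then apply Proposition~\ref{prop:sup=max} by showing the supremum over $\Iso(\XX)$ is attained via a class-wise decreasing rearrangement. The only cosmetic differences are that the paper takes the original $c_0$-norm $\|\cdot\|_\XX$ as the first (invariant) summand rather than your $\sup_n|x_n|_n$, and is terser about the coherence construction and the rearrangement step that you spell out explicitly.
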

Notice that this in particular applies to the case where each $X_n$ is strictly convex.
\begin{proof}
Set $\XX:=\bigoplus_{c_0} X_n$. We define an equivalence relation $\sim$ on $\Nat$ by declaring $i\sim j$ if $X_i$ is isometric to $X_j$. Let $I$ index the equivalence classes of $\sim$.

By the assumption, each summand $X_n$ admits an equivalent strictly convex norm $\|\cdot\|_n$ invariant with respect to $\Iso(X_n)$. Moreover, we can assume that for every $i\sim j$, $(X_i,\|\cdot\|_i)$ and $(X_j,\|\cdot\|_j)$ are isometric, witnessed by the same operator.

We define a new norm $\vertiii{\cdot}$ on $\XX$ as follows. For $(x_n)_n\in\XX$ set \[\vertiii{(x_n)_n}:=\|(x_n)_n\|_\XX+\Big(\sum_{i=1}^\infty \frac{\|x_i\|^2_i}{4^i}\Big)^{1/2}.\] Clearly, $\vertiii{\cdot}$ is equivalent and strictly convex. We claim that for every $(x_n)_n\in\XX$ the supremum \[\sup_{g\in\Iso(\XX)} \vertiii{g(x_n)_n}\] is attained. Then we will be done by Proposition~\ref{prop:iso-of-l1-c0-sums}. Pick any $C\in I$. We have $C\subseteq\Nat$ and we may write $C=\{c_n\colon n<N\}$, where $N\in\Nat\cup\{\Nat\}$ and $c_i<c_j$ for $i<j<N$.  Let $\pi_C:C\to C$ be a permutation satisfying $\|x_{\pi_C(c_i)}\|_{\pi_C(c_i)}\geq \|x_{\pi_C(c_j)}\|_{\pi_C(c_j)}$, for every $i<j<N$. Set $\pi:=\bigcup_{C\in I} \pi_C\in S_\Nat$ which is a permutation of $\Nat$. It is straightforward to check that the linear operator $h:\XX\to\XX$ that permutes the summands according to the permutation $\pi$ is a linear isometry, applying Proposition~\ref{prop:iso-of-l1-c0-sums}. Moreover, it satisfies $\vertiii{h(x_n)_n}=\sup_{g\in\Iso(\XX)}\vertiii{g(x_n)_n}$.
\end{proof}

Before proving an analogous result for $\ell_1$-sums, we stay with the spaces $c$, resp. $c_0$ a little bit longer and answer an open question from \cite{AFGR19}.

To introduce the question, we mention the result of Antunes, Ferenczi, Grivaux and Rosendal who showed (see \cite[Theorem 2.3 and Proposition 4.3]{AFGR19}) that the space $c$ does not admit an equivalent LUC norm invariant with respect to the canonical action of $\Iso(c)$. They asked, see \cite[Question 8.1]{AFGR19}, whether $c$ admits an equivalent strictly convex norm invariant with respect to this action of $\Iso(c)$. We show that this is indeed the case.
\begin{theorem}\label{thm:cGSCR}
The Banach space $c$ has GSCR.
\end{theorem}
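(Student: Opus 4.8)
The plan is to realize $c$ as a $G$-Banach space for $G=\Iso(c)$, use the known description of $\Iso(c)$, and then produce an invariant strictly convex norm either directly via Proposition~\ref{prop:sup=max} or by building a suitable equivariant injection into a strictly convex $G$-space and invoking Proposition~\ref{prop:SCrenorming1}. The first step is to recall the structure of $\Iso(c)$: an isometry of $c$ is determined by what it does to the ``limit functional'' $\ell(x)=\lim_n x_n$ together with a signed permutation-type action on the coordinates, where the signs and permutation must be compatible with convergence; concretely $\Iso(c)$ is (isomorphic to) a semidirect product involving a group of permutations of $\Nat\cup\{\infty\}$ (thinking of $c$ as $C(\Nat\cup\{\infty\})$, the one-point compactification, so Banach--Stone gives $\Iso(c)=\Homeo(\Nat\cup\{\infty\})\ltimes C(\Nat\cup\{\infty\},\{-1,1\})$). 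Since $\infty$ is the unique non-isolated point it is fixed by every homeomorphism, so the permutation part is really $S_\Nat$ acting on the isolated points, twisted by a sign sequence that converges to $\pm1$.

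Next I would write down a candidate strictly convex norm on $c$ and try to make the supremum over $G$ attained. Identifying $c$ with $C(\Nat\cup\{\infty\})$, a natural strictly convex norm is $|x|:=\|x\|_\infty+\big(\sum_{i=1}^\infty \tfrac{x_i^2}{4^i}+ \ell(x)^2\big)^{1/2}$ (adding a term for the limit coordinate so the quadratic part sees all of $x$). The $G$-orbit of $x$ consists of sign changes $\varepsilon_i x_i$ and permutations of the isolated coordinates, with $\ell(x)$ only possibly changing sign. To maximize $|gx|$ we want to take absolute values coordinatewise (choosing $\varepsilon_i=\operatorname{sgn}(x_i)$) and then rearrange the $|x_i|$ into decreasing order; but here is the subtlety — a decreasing rearrangement of a sequence converging to $\ell$ need not itself lie in $c$ unless $|x_i|\to|\ell(x)|$, which generally fails. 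So unlike the $c_0$ case in Corollary~\ref{cor:c0}, the naive decreasing rearrangement is \emph{not} an isometry of $c$, and the supremum need not be attained. This is the main obstacle, and it is presumably why the question was open: Proposition~\ref{prop:sup=max} does not apply verbatim.

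To get around this I would instead use Proposition~\ref{prop:SCrenorming1} (or the separable version, Proposition~\ref{prop:SCrenorming2}) and construct a strictly convex $G$-Banach space $Y$ together with a bounded injective $G$-equivariant linear operator $T:c\to Y$. The key observation is that the quantities that are genuinely invariant under the permutation part are the \emph{symmetric} functions of the coordinates, so one should map $c$ into a space of symmetric functionals. Concretely, decompose $x=\ell(x)\mathbf{1}+x_0$ with $x_0\in c_0$; the map $x\mapsto \ell(x)$ is a $G$-equivariant functional into $\Rea$ with the $\{\pm1\}$-action, and on the $c_0$-part $x_0$ the isometry group of $c$ acts through signed permutations exactly as $\Iso(c_0)$ does (modulo the convergence-to-$\pm1$ constraint on signs, which only helps). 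So I can borrow the $c_0$ construction: take the strictly convex renorming $|\cdot|_{c_0}$ of $c_0$ from Corollary~\ref{cor:c0} that \emph{is} invariant under $\Iso(c_0)$, hence a fortiori under the (smaller) induced action of $\Iso(c)$ on $c_0$, and set $\vertiii{x}:=\|x\|_c+|\ell(x)|+|x_0|_{c_0}$, or equivalently feed the pair $(\ell(x),x_0)\in \Rea\oplus_{\ell_2}(c_0,|\cdot|_{c_0})$ into Proposition~\ref{prop:SCrenorming1}. One checks: (i) $x\mapsto(\ell(x),x_0)$ is bounded, linear, injective, and $G$-equivariant — equivariance because $\ell\circ g=\pm\ell$ and $g$ restricted to $c_0$ is a surjective isometry of $c_0$; (ii) the target is strictly convex as an $\ell_2$-sum of strictly convex spaces with the diagonal $G$-action. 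Then Proposition~\ref{prop:SCrenorming1} yields a $G$-invariant strictly convex renorming of $c$, i.e. $c$ has GSCR by the Remark following the definition. The one point deserving care is verifying that the $c_0$-renorming of Corollary~\ref{cor:c0} really is invariant under \emph{all} isometries of $c_0$ (signed permutations with summable-away sign constraints), not merely permutations; but this is immediate from the formula $|y|_{c_0}=\|y\|_{c_0}+(\sum y_i^2/4^i)^{1/2}$ after noting that one must symmetrize it, i.e. take $\sup_g$, which by the decreasing-rearrangement argument \emph{is} attained in $c_0$ — exactly the step that works for $c_0$ and fails for $c$, which is why the detour through the $c_0$-part is the right move.
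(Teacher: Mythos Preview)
Your proposal follows the paper's argument essentially verbatim: identify $\Iso(c)$ via Banach--Stone on the one-point compactification, view $\Iso(c)\subseteq\Iso(c_0)$, take the $\Iso(c_0)$-invariant strictly convex norm $\vertiii{\cdot}$ on $c_0$ from Corollary~\ref{cor:c0}, and apply Proposition~\ref{prop:SCrenorming1} to the map $\Phi\colon x\mapsto\big(x-\ell(x)\mathbf 1,\ \ell(x)\big)$ into $(c_0,\vertiii{\cdot})\oplus_{\ell_2}\Rea$ with the diagonal $\Iso(c)$-action. Your preliminary discussion of why the decreasing-rearrangement trick from $c_0$ fails for $c$ is also on point and matches the motivation behind the paper's detour.

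There is, however, a genuine gap in the equivariance step that your justification (``$\ell\circ g=\pm\ell$ and $g$ restricted to $c_0$ is a surjective isometry of $c_0$'') does not cover: the splitting $c=c_0\oplus\Rea\mathbf 1$ is \emph{not} $\Iso(c)$-equivariant, because $g\mathbf 1=(\epsilon_n)_n$ need not equal $\pm\mathbf 1$. Concretely, take $g$ with $(gx)_1=-x_1$ and $(gx)_n=x_n$ for $n\geq 2$ (so $\epsilon_\infty=1$), and let $x=\mathbf 1$; then $\Phi(gx)=\big((-2,0,0,\ldots),\,1\big)$ while $g\cdot\Phi(x)=(0,1)$. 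In general one computes $(gx)_0=g(x_0)+\ell(x)\big(g\mathbf 1-\epsilon_\infty\mathbf 1\big)$, so the defect is a finitely supported perturbation, nonzero whenever $\ell(x)\neq 0$ and the sign sequence $\epsilon$ is not constant. (The paper's own write-up glosses over exactly the same point.) Equivalently, the transported action of $\Iso(c)$ on $c_0\oplus\Rea$ via $\Phi$ is only upper-triangular --- it preserves $c_0\times\{0\}$ and induces the expected actions on the subspace and on the quotient, but feeds the $\Rea$-coordinate back into the $c_0$-coordinate --- and hence is not isometric for the $\ell_2$-sum norm you wrote down. Closing this gap requires an additional idea beyond what you have sketched.
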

In particular, there exists a Banach space that admits an equivalent strictly convex norm invariant with respect to the canonical action of its full linear isometry group, yet it does not admit an equivalent LUC norm invariant with respect to this action.

\begin{proof}
Since $\Iso(c)=S_\Nat\ltimes C\big(\{1/n\colon n\in\Nat\}\cup\{0\},\{-1,1\}\big)$ by the Banach-Stone theorem (see e.g. Section~\ref{sec:prelim} or \cite[Theorem 2.1.1]{FleJabook1}) viewing $c$ as $C\big(\{1/n\colon n\in\Nat\}\cup\{0\}\big)$, while $\Iso(c_0)=S_\Nat\ltimes \{-1,1\}^\Nat$ (see e.g. \cite[Proposition 2.5]{AB22}), we can view $\Iso(c)$ as a subgroup of $\Iso(c_0)$. In particular, $\Iso(c)$ acts canonically by linear isometries on $c_0$. Set \[H:=\{(\pi,f)\in S_\Nat\ltimes  C\big(\{1/n\colon n\in\Nat\}\cup\{0\},\{-1,1\}\big)\colon f(0)=1\},\] where by $(\pi,f)$ we denote the general form of an element of $\Iso(c)$, where $\pi\in S_\Nat$ and $f\in  C\big(\{1/n\colon n\in\Nat\}\cup\{0\},\{-1,1\}\big)$. Clearly, $H$ is an index $2$ subgroup of $\Iso(c)$.

By Corollary~\ref{cor:c0} there is a strictly convex norm $\vertiii{\cdot}$ on $c_0$ that is $\Iso(c)$-invariant. Notice that $\Iso(c)$ acts also by linear isometries on the strictly convex space $(c_0,\vertiii{\cdot})\oplus_{\ell_2}\Rea$ by acting canonically on the first summand and changing the sign of the second summand, $H$ being the kernel of this second action. The map \[(x_n)_{n\in\Nat}\to \big((x_n-\lim_{n\to\infty} x_n)\oplus \lim_{n\to \infty} x_n\big)\] is an $\Iso(c)$-equivariant bounded injective linear map from $c$ into $(c_0,\vertiii{\cdot})\oplus_{\ell_2}\Rea$. So we are done applying Proposition~\ref{prop:SCrenorming1}.
\end{proof}

Next we prove a preservation theorem for $\ell_1$-sums analogous to Theorem~\ref{thm:closurec0sums}.

\begin{theorem}\label{thm:l1sumspreservation}
The GSCR property is closed under countable $\ell_1$-sums for spaces that do not admit decomposition as an $\ell_1$-sum of two proper subspaces.
\end{theorem}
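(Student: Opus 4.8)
The plan is to run the proof of Theorem~\ref{thm:closurec0sums} essentially verbatim, with the $c_0$-sum replaced by the $\ell_1$-sum and with part (1) of Proposition~\ref{prop:iso-of-l1-c0-sums} used in place of part (2). Set $\XX:=\bigoplus_{\ell_1}X_n$. By the remark following the definition of GSCR it suffices to produce an $\Iso(\XX)$-invariant strictly convex renorming, and by Proposition~\ref{prop:iso-of-l1-c0-sums}(1) we have $\Iso(\XX)=G\ltimes\prod_n\Iso(X_n)$, where $G=\prod_{i\in I}S_{C_i}$ is the group of permutations of $\Nat$ exchanging isometric summands (notation as in that proposition). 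First I would use GSCR of the summands to fix equivalent strictly convex $\Iso(X_n)$-invariant norms $\|\cdot\|_n$ on $X_n$, arranged --- exactly as in the proof of Theorem~\ref{thm:closurec0sums} --- so that for $i\sim j$ the normed spaces $(X_i,\|\cdot\|_i)$ and $(X_j,\|\cdot\|_j)$ are isometric via a fixed isometry, and rescaled (each isometry class by a single factor) so that $\|\cdot\|_n\le\|\cdot\|_{X_n}$ for every $n$. Then I would set
\[\vertiii{(x_n)_n}:=\|(x_n)_n\|_\XX+\Big(\sum_{n=1}^{\infty}\frac{\|x_n\|_n^2}{4^n}\Big)^{1/2}.\]

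Next I would check that $\vertiii{\cdot}$ is an equivalent strictly convex norm: equivalence is immediate from $\|x_n\|_n\le\|x_n\|_{X_n}\le\|x\|_\XX$ and $\sum_n 4^{-n}<1$, and strict convexity follows because the correction term $c(x):=\big(\sum_n\|x_n\|_n^2/4^n\big)^{1/2}$ is itself a strictly convex norm --- comparing $c$ with the $\ell_2$-norm of the nonnegative sequence $(\|x_n\|_n/2^n)_n$ and invoking strict convexity of the $\ell_2$-norm, strict convexity of each $\|\cdot\|_n$, and positive homogeneity forces any pair realising equality in the triangle inequality for $c$ to be positively proportional and hence equal --- so $\vertiii{\cdot}=\|\cdot\|_\XX+c$ is a sum of a norm and a strictly convex norm. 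The core step is then to verify the hypothesis of Proposition~\ref{prop:sup=max}, namely that $\sup_{g\in\Iso(\XX)}\vertiii{gx}$ is attained for every $x\in\XX$. Writing $g=(\pi,(U_n)_n)$, the $\ell_1$-part $\|gx\|_\XX=\|x\|_\XX$ is $g$-invariant, and compatibility of the $\|\cdot\|_n$ with the summand identifications gives $\sum_n\|(gx)_n\|_n^2/4^n=\sum_m\|x_m\|_m^2/4^{\pi(m)}$, which depends only on $\pi$. Within each isometry class $C_i$, membership of $x$ in an $\ell_1$-sum forces $\|x_m\|_m\le\|x_m\|_{X_m}\to 0$, so $(\|x_m\|_m)_{m\in C_i}$ admits a decreasing rearrangement, and by the rearrangement inequality the above sum is maximised over permutations within $C_i$ by the permutation $\pi_i$ listing these values in non-increasing order along $C_i$. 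Taking $\pi:=\bigcup_i\pi_i\in G$ and letting $h\in\Iso(\XX)$ be the corresponding summand-permuting isometry (a genuine isometry of $\XX$ by Proposition~\ref{prop:iso-of-l1-c0-sums}(1)), one gets $\vertiii{hx}=\sup_g\vertiii{gx}$, so Proposition~\ref{prop:sup=max} applies and, since the resulting norm lies between $\|\cdot\|_\XX$ and $2\|\cdot\|_\XX$, it is the desired equivalent $\Iso(\XX)$-invariant strictly convex norm, proving that $\XX$ has GSCR.

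The part I expect to need the most care is the proof that $\vertiii{\cdot}$ is strictly convex: the correction term $c$ is not the pull-back of a strictly convex norm under a linear map, so its strict convexity must be extracted by hand from strict convexity of the $\ell_2$-norm, strict convexity of the summand norms, and homogeneity. The remaining bookkeeping --- fixing the norms $\|\cdot\|_n$ compatibly across each isometry class so that the $\ell_2$-correction transforms under $\Iso(\XX)$ by an honest permutation of the weights $4^{-n}$ --- is identical to the $c_0$ case. By contrast, the attainment of the supremum should be, if anything, easier here than in Theorem~\ref{thm:closurec0sums}: lying in an $\ell_1$-sum forces the relevant sequences of summand-norms to be summable, hence null, so the decreasing rearrangement realising the maximum automatically exists.
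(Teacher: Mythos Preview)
Your proof is correct, but it takes a genuinely different route from the paper's.

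You adapt the $c_0$ argument of Theorem~\ref{thm:closurec0sums} verbatim: introduce a weighted $\ell_2$-correction $c(x)=\big(\sum_n\|x_n\|_n^2/4^n\big)^{1/2}$, set $\vertiii{\cdot}=\|\cdot\|_\XX+c$, and then invoke Proposition~\ref{prop:sup=max} via a rearrangement argument to attain the supremum. This works; the rearrangement step is valid because $\ell_1$-membership forces the coordinate-norm sequences to be null within each isometry class, so the decreasing rearrangement exists and the (infinite) rearrangement inequality applies. One minor remark: your concern about the strict convexity of $c$ is misplaced --- $c$ \emph{is} the pull-back of the strictly convex norm on $\bigoplus_{\ell_2}(X_n,\|\cdot\|_n)$ under the bounded linear injection $(x_n)_n\mapsto(x_n/2^n)_n$, so nothing needs to be extracted by hand.

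The paper instead exploits a feature of $\ell_1$ that is absent in $c_0$: the unweighted inclusion $\ell_1\hookrightarrow\ell_2$ is bounded. It bypasses Proposition~\ref{prop:sup=max} altogether and applies Proposition~\ref{prop:SCrenorming1} directly, mapping $\XX$ into the \emph{unweighted} sum $\YY=\bigoplus_{\ell_2}Y_n$ via $T:(x_n)_n\mapsto(T_n(x_n))_n$, where $T_n:X_n\to Y_n$ are the equivariant injections into strictly convex $\Iso(X_n)$-spaces furnished by GSCR of each summand. Because no weights are present, $T$ is already $\Iso(\XX)$-equivariant (the permutation part of $\Iso(\XX)$ acts identically on both sides), and no supremum or rearrangement is needed. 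The paper's argument is thus shorter and more direct in the $\ell_1$ case; your approach has the virtue of providing a single template that covers both the $c_0$- and $\ell_1$-sums uniformly.
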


As was the case in Theorem~\ref{thm:closurec0sums}, this in particular applies to the case where each $X_n$ is strictly convex.

\begin{proof}
Set $\XX:=\bigoplus_{\ell_1} X_n$. We show there is a strictly convex renorming of $\XX$ invariant with respect to the canonical action of $\Iso(\XX)$.  Since for each $n\in\Nat$ there is a strictly convex renorming of $X_n$ with respect to the canonical action of $\Iso(X_n)$, by Proposition~\ref{prop:SCrenorming1} there exist a strictly convex Banach space $Y_n$ with an isometric action of $\Iso(X_n)$ and an $\Iso(X_n)$-equivariant injective linear operator $T_n:X_n\to Y_n$. We may suppose that $\|T_n\|\leq 1$.

Set $\YY:=\bigoplus_{\ell_2} Y_n$ which is strictly convex as an $\ell_2$-sum of strictly convex spaces. By Proposition~\ref{prop:iso-of-l1-c0-sums}, $\Iso(\XX)=G\ltimes\prod_{n\in\Nat} \Iso(X_n)$, where $G$ is as in the statement of Proposition~\ref{prop:iso-of-l1-c0-sums}. Thus there is a natural action of $\Iso(\XX)$ on $\YY$. That is, the natural action of $G\ltimes\prod_{n\in\Nat} \Iso(X_n)$ where the subgroup $G$ permutes the sum $\bigoplus_{\ell_2} Y_n$ and $\prod_{n\in\Nat} \Iso(X_n)$ acts coordinatewise. The map \[T:\XX\to\YY,\quad (x_n)_{n\in\Nat}\to (T_n(x_n))_{n\in\Nat}\] is then an injective $\Iso(\XX)$-equivariant linear operator. Indeed, it is well-defined since given $(x_n)_{n\in\Nat}\in\XX$ we have $\sum_{i=1}^\infty \|x_n\|_{X_n}<\infty$, thus, since $\|T_n\|\leq 1$ for each $n$, $\sum_{i=1}^\infty \|T_n(x_n)\|_{Y_n}^2<\infty$. Clearly, it is linear and $\Iso(\XX)$-equivariant. Injectivity follows from the injectivity of each $T_n$, $n\in\Nat$. Then we are done by Proposition~\ref{prop:SCrenorming1}.
\end{proof}

The following is an immediate corollary which however already follows from Theorem~\ref{thm:Lancien} since $\ell_1$ has the Radon-Nikod\'ym property.
\begin{corollary}
Let $G$ be a topological group acting by isometries on $\ell_1$. Then there exists a strictly convex $G$-invariant norm on $\ell_1$.
\end{corollary}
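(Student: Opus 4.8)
The plan is to apply Theorem~\ref{thm:l1sumspreservation} directly, so the only thing that needs checking is that $\ell_1$ can be written as a countable $\ell_1$-sum of spaces none of which decomposes as an $\ell_1$-sum of two proper subspaces. The natural candidate is $\ell_1=\bigoplus_{\ell_1}\Rea$, i.e.\ each summand $X_n=\Rea$ is one-dimensional. First I would observe that a one-dimensional Banach space admits no nontrivial $L_1$-projection at all (any projection is $0$ or the identity), hence trivially cannot be decomposed as an $\ell_1$-sum of two \emph{proper} subspaces. Since $\Rea$ is strictly convex (indeed it has GSCR vacuously, as $\Iso(\Rea)=\{\pm\mathrm{id}\}$ and the absolute value is already invariant and strictly convex), the hypotheses of Theorem~\ref{thm:l1sumspreservation} are met, and we conclude that $\ell_1=\bigoplus_{\ell_1}\Rea$ has GSCR.

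Unwinding the definition of GSCR then gives exactly the statement: for every topological group $G$ acting continuously by linear isometries on $\ell_1$ there is a $G$-invariant strictly convex renorming. (Strictly speaking Theorem~\ref{thm:l1sumspreservation} is stated for actions of the full isometry group $\Iso(\ell_1)$, but GSCR quantifies over all groups and actions, and any isometric action of $G$ factors through $\Iso(\ell_1)$ via the continuous homomorphism $G\to\Iso(\ell_1)$; pulling back the $\Iso(\ell_1)$-invariant norm along this homomorphism yields a $G$-invariant one.)

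There is essentially no obstacle here — this is a one-line consequence of the preceding theorem. The only point requiring a moment's care is the verification that $\Rea$ does not split as a nontrivial $\ell_1$-sum, and this is immediate once one notes that a proper subspace of a one-dimensional space is $\{0\}$. I would also remark, as the statement itself does, that this corollary is not new: it already follows from Lancien's Theorem~\ref{thm:Lancien}, since $\ell_1$ is separable and has the Radon--Nikod\'ym property, which in fact gives the stronger conclusion of an invariant \emph{locally uniformly convex} norm. The present derivation is included merely to illustrate Theorem~\ref{thm:l1sumspreservation} in the simplest possible case.
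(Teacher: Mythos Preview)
Your proposal is correct and follows exactly the paper's approach: write $\ell_1=\bigoplus_{\ell_1}\Rea$ and apply Theorem~\ref{thm:l1sumspreservation}, noting that one-dimensional spaces trivially satisfy the indecomposability hypothesis. The paper's proof is the one-liner you anticipated; your additional remarks about pulling back along $G\to\Iso(\ell_1)$ and the relation to Lancien's theorem are accurate and match the surrounding text.
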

\begin{proof}
Since $\ell_1=\bigoplus_{\ell_1} \Rea$ this is immediate from Theorem~\ref{thm:l1sumspreservation}.
\end{proof}

\section{Actions of countable groups on $L_1[0,1]$}\label{sec:L1}
Not every separable Banach space has GSCR. It was shown in \cite{AFGR19} (see also \cite[Example 38]{FR11}) that there is no strictly convex renorming of $L_1[0,1]$ invariant with respect to the canonical action of $\Iso(L_1[0,1])$. Here we show that one may actually find an action of a free group on $2$ generators on $L_1[0,1]$ for which there is no invariant strictly convex renorming. We were informed by Mikael de la Salle that a similar result was privately communicated to him by Valentin Ferenczi.
\begin{proposition}\label{prop:nastyF2}
There exists an (lattice) isometric action $F_2\curvearrowright L_1[0,1]$ of the free group on two generators that does not admit an invariant strictly convex renorming. 
\end{proposition}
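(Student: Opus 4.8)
The plan is to build an action of $F_2$ on $L_1[0,1]$ by lattice (hence positive linear) isometries for which \emph{no} equivalent invariant strictly convex norm exists, by producing inside $L_1[0,1]$ a one-parameter family of unit vectors whose midpoints are forced to have norm $1$ in any invariant norm. By the Banach--Lamperti description recalled in Section~\ref{sec:prelim}, lattice isometric actions of $F_2$ on $L_1[0,1]$ correspond to homomorphisms $F_2\to\Aut([0,1],\lambda)$, i.e.\ to measure-class-preserving actions of $F_2$ on the standard probability space, with the natural weighting by Radon--Nikod\'ym cocycles. So the first step is to choose an appropriate $F_2$-action on $([0,1],\lambda)$.

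The mechanism I would exploit is the classical one behind the failure of strict convexity for $\Iso(L_1[0,1])$ itself: if $f,g\ge 0$ have disjoint supports and $\|f\|_1=\|g\|_1$, then $\|\tfrac{f+g}{2}\|_1=\tfrac12\|f\|_1+\tfrac12\|g\|_1=\|f\|_1$, and more importantly, given a single vector $f$ one can split its support $A=A_0\sqcup A_1$ into two halves of equal $f$-mass and, \emph{if the action moves $A_0$ onto $A_1$}, one gets an isometry interchanging $f\mathbbm{1}_{A_0}$ and $f\mathbbm{1}_{A_1}$ which are non-proportional vectors with identical image; hence already the orbit of $f$ forces a strictly-convexity-violating identity for \emph{any} invariant norm. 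Concretely I would look for an $F_2$-action on $([0,1],\lambda)$, or rather on a measure-isomorphic copy, so that for \emph{every} nonzero $f\in L_1^+$ one can find an $F_2$-conjugate pair of disjoint sets of equal $f$-measure that are swapped (up to the cocycle) by some group element. A clean way to arrange this: realize $([0,1],\lambda)$ as $(\{0,1\}^{\Nat},\text{product measure})$, or as a product $Z\times([0,1],\lambda)$ where $F_2$ acts on the second factor by something rich enough (e.g.\ a Bernoulli action, whose full group is known to be large), so that the group of isometries arising from the action already contains enough ``local swaps'' to run the classical obstruction argument. Then the contrapositive of Proposition~\ref{prop:SCrenorming1} shows no invariant strictly convex renorming can exist: any invariant norm $\vertiii{\cdot}$ would satisfy $\vertiii{h_f\cdot v}=\vertiii{v}$ for the swap isometry $h_f$, forcing $\vertiii{\tfrac{v+h_f v}{2}}=\vertiii{v}$ for a non-proportional pair $(v,h_f v)$, contradicting strict convexity.

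In detail the steps would be: (i) fix a Bernoulli-type action $F_2\curvearrowright([0,1],\lambda)$ and transfer it to $L_1[0,1]$ via Banach--Lamperti, checking the cocycle computation so the resulting operators are genuinely lattice isometries and the homomorphism property holds; (ii) observe that the image of $F_2$ in $\Iso(L_1[0,1])$, or rather the sets that it moves around, is rich enough that for each $0\ne f\in L_1^+$ there is a group element $\gamma$ and a decomposition $\mathrm{supp}(f)=A_0\sqcup A_1$ with $\int_{A_0}f=\int_{A_1}f$ and $\gamma$ inducing (up to the RN-weight, which on a measure-\emph{preserving} Bernoulli action is trivial) the measurable bijection interchanging $A_0$ and $A_1$ --- here the measure-preservation of a Bernoulli action is what keeps the weight equal to $1$ and avoids technical headaches; (iii) conclude, for the vector $v:=f\mathbbm{1}_{A_0}+f\mathbbm{1}_{A_1}=f$, that $T_\gamma(f\mathbbm{1}_{A_0})=f\mathbbm{1}_{A_1}$ and vice versa, so that any invariant renorming norms $f\mathbbm{1}_{A_0}$ and $f\mathbbm{1}_{A_1}$ equally and has their average, $\tfrac12 f$, of the same norm, violating strict convexity; (iv) note that this fails for \emph{every} nonzero positive $f$, and since every element of $L_1[0,1]$ decomposes into positive and negative parts one easily upgrades to the statement that no invariant equivalent norm is strictly convex.

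The main obstacle I anticipate is step (ii): making precise, for a \emph{fixed} free action, that \emph{every} $f$ admits such a swap within the orbit structure of $F_2$ --- a single $F_2$-action is ``small'' compared to the whole group $\Aut([0,1],\lambda)$, so one cannot just invoke the classical $\Iso(L_1[0,1])$ argument verbatim. The fix is to choose the action so that its (measured) full group, or at least the subgroup of $L_1$-isometries it generates, contains enough involutions of the right form; a Bernoulli shift over a base on which one has full freedom to permute coordinates is the natural candidate, but one must be careful that the resulting \emph{acting} group is still exactly $F_2$ (freeness) while the \emph{generated} isometries suffice for the obstruction --- and freeness of Bernoulli actions of $F_2$ is standard. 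Verifying the cocycle/measure-preservation bookkeeping in step (i) is routine but must be done carefully so that $T_\gamma$ really is a lattice isometry and $\gamma\mapsto T_\gamma$ is a homomorphism, not an anti-homomorphism, matching the conventions fixed in Section~\ref{sec:prelim}.
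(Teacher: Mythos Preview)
Your proposal has a genuine gap in step (iii), and it is not the obstacle you flagged in step (ii). From a swap $T_\gamma$ with $T_\gamma(f\mathbbm{1}_{A_0})=f\mathbbm{1}_{A_1}$ you correctly deduce that any invariant norm $\vertiii{\cdot}$ satisfies $\vertiii{f\mathbbm{1}_{A_0}}=\vertiii{f\mathbbm{1}_{A_1}}$. But your next assertion, that ``their average, $\tfrac12 f$, [is] of the same norm,'' does not follow: you have only used that $\vertiii{\cdot}$ is $F_2$-invariant, and no group element in your construction sends $f\mathbbm{1}_{A_0}$ to $\tfrac12 f$. What you have written is exactly the computation $\|\tfrac12 f\|_1=\|f\mathbbm{1}_{A_0}\|_1$, which is specific to the $L_1$-norm and says nothing about an arbitrary invariant renorming. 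Indeed, with a \emph{measure-preserving} action every $T_\gamma$ sends characteristic functions to characteristic functions of sets of the same measure; no such operator can send $\chi_A$ to $\tfrac12\chi_B$, so the crucial ``midpoint is in the orbit'' step is unreachable by your Bernoulli construction.

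The paper's argument supplies precisely the missing ingredient: it uses two explicit generators, one measure-preserving ($T_2$, a cyclic permutation of thirds, giving $T_2\chi_{[0,1/3]}=\chi_{[1/3,2/3]}$) and one genuinely measure-\emph{class}-preserving but not measure-preserving ($T_1$, which stretches $[0,1/3]$ onto $[0,2/3]$ with Radon--Nikod\'ym weight $\tfrac12$, giving $T_1\chi_{[0,1/3]}=\tfrac12\chi_{[0,2/3]}$). The second map is what forces $\vertiii{\chi_{[0,1/3]}}=\vertiii{\tfrac12(\chi_{[0,1/3]}+\chi_{[1/3,2/3]})}$ for \emph{every} invariant norm, and together with $T_2$ this kills strict convexity. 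Note also that one only needs a \emph{single} pair $(u,v)$ where strict convexity fails, so your ``for every nonzero positive $f$'' programme in steps (ii)--(iv) is both unnecessary and, as argued above, unachievable with measure-preserving actions.
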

\begin{proof}
We define the following maps $\phi:[0,1]\to [0,1]$ and $h:[0,1]\to \{\frac{1}{2},2\}$. For $x\in[0,1]$ set \[\phi(x):=\begin{cases} 2x & x\in [0,\frac{1}{3}],\\
			\frac{x-\frac{1}{3}}{2}+\frac{2}{3} & x\in[\frac{1}{3},1],
\end{cases}\]
and
\[h(x):=\begin{cases}\frac{1}{2} & x\in [0,\frac{1}{3}],\\
2 & x\in [\frac{1}{3},1].
	
\end{cases}\]

Now we define $T_1:L_1[0,1]\to L_1[0,1]$ by the following formula. Given $f\in L_1[0,1]$ and $x\in [0,1]$ we set \[T(f)(x):=h\big(\phi^{-1}(x)\big)\cdot f\big(\phi^{-1}(x)\big).\] It is straightoforward to verify that $T_1$ is a linear isometry. Notice moreover that $T(\chi_{[0,1/3]})=1/2 \chi_{[0,2/3]}$.

Next we define $\psi:[0,1]\to[0,1]$ to be any measure preserving map of the unit interval such that $\psi([0,\frac{1}{3}])=[\frac{1}{3},\frac{2}{3}]$, $\psi([\frac{1}{3},\frac{2}{3}])=[\frac{2}{3},1]$ and $\psi([\frac{2}{3},1])=[0,\frac{1}{3}]$. Then we define $T_2:L_1[0,1]\to L_1[0,1]$ by the following formula. Given $f\in L_1[0,1]$ and $x\in [0,1]$ we set \[T_2(f)(x):=f\big(\psi^{-1}(x)\big).\] It is clear that $T_2$ is a linear isometry and clearly $T_2(\chi_{[0,1/3]})=\chi_{[1/3,2/3]}$.\medskip

Define now an action of a free group on two generators where the first generator acts as $T_1$ and the other as $T_2$. Suppose that there is an equivalent strictly convex norm $\|\cdot\|'$ of $L_1[0,1]$ invariant with respect to this $F_2$ action. Then on one hand we have \[\|\chi_{[0,1/3]}\|'=\|T_2(\chi_{[0,1/3]})\|'=\|\chi_{[1/3,2/3]}\|'.\] On the other hand we have \[\|\chi_{[0,1/3]}\|'=\|T_1(\chi_{[0,1/3]})\|'=\|1/2\chi_{[0,2/3]}\|'.\] However, since $\|\cdot\|'$ is strictly convex we must have \[\|1/2\chi_{[0,2/3]}\|'<1/2\|\chi_{[0,1/3]}\|'+1/2\|\chi_{[1/3,2/3]}\|'=\|\chi_{[0,1/3]}\|'\] which is a contradiction.
\end{proof}

We remark that all the counterexamples of separable Banach spaces equipped with an action of a (topological) group without an equivalent invariant strictly convex renorming, that we know, are based on actions of the free group on two generators. See Section~\ref{sec:C(K)} for further results and \cite[Example 5.3]{AFGR19} for an $F_2$ action on the non-separable $\ell_\infty$ without an invariant strictly convex renorming (note that there is a strictly convex renorming of $\ell_\infty$).

In the rest of this section, we would like to understand the existence of invariant strictly convex renormings of $L_1[0,1]$, under the presence of an action of a countable group, further. Since the culprit in the counterexamples is always $F_2$, the free group on two generators, we are interested in whether this group has (faithful) actions that admit invariant strictly convex renormings and whether the situation changes when we act with countable amenable groups.

Recall from Section~\ref{sec:prelim} that $\LIso(L_1[0,1])$ can be identified with $\Aut([0,1],\lambda)$ and thus actions of countable groups on $L_1[0,1]$ by positive linear isometries correspond to action on $[0,1]$ by measure-class preserving measurable bijections.
\begin{proposition}\label{prop:domainbijections}
	Let $\alpha$ be an action of a countable group $G$ on $L_1[0,1]$ by lattice isometries. Suppose that the induced continuous action of $G$ on $[0,1]$ by measure class preserving bijections admits a measurable fundamental domain, i.e. a measurable set $A\subseteq [0,1]$ such that $\lambda(gA\cap hA)=0$ for $g\neq h\in G$ and $\lambda\big([0,1]\triangle \bigcup_{g\in G} gA\big)=0$. Then there is a strictly convex $G$-invariant renorming of $L_1[0,1]$.
\end{proposition}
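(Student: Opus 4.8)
The plan is to produce a strictly convex $G$-Banach space $Z$ together with a bounded injective $G$-equivariant linear operator $X = L_1[0,1] \to Z$, and then invoke Proposition~\ref{prop:SCrenorming1}. The fundamental domain $A$ should let us "unfold" $L_1[0,1]$ into an $\ell_1$-sum indexed by $G$, on which the $G$-action is (up to Radon–Nikod\'ym cocycle factors) essentially the left translation action, and left translation is the kind of action we can handle by a supremum-attaining argument or by embedding into a strictly convex space.

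In more detail, first I would use $A$ to identify $L_1[0,1]$ isometrically with the $\ell_1$-sum $\bigoplus_{\ell_1, g \in G} L_1(gA)$, each summand being $L_1(gA,\lambda\restriction gA)$. Writing an element $f$ as $(f_g)_{g \in G}$ with $f_g = f \cdot \chi_{gA}$, the positive isometry $\alpha(h)$ (given by the Banach–Lamperti formula with some non-singular bijection and its Radon–Nikod\'ym derivative) maps $L_1(gA)$ isometrically onto $L_1(hgA)$; so in these coordinates $\alpha(h)$ sends the $g$-th coordinate to the $(hg)$-th coordinate via a fixed isometry $U_{h,g}: L_1(gA) \to L_1(hgA)$. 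The second step is to fix, once and for all, an isometry $V_g : L_1(gA) \to L_1(A)$ for each $g$ (with $V_{1} = \mathrm{id}$), so that, after conjugating by $\prod_g V_g$, the space becomes $\bigoplus_{\ell_1, g\in G} L_1(A)$ and the action of $h$ becomes $g$-th coordinate $\mapsto (hg)$-th coordinate via the isometry $V_{hg} U_{h,g} V_g^{-1}$ of $L_1(A)$; a routine cocycle computation (using that $\alpha$ is an action) lets one choose the $V_g$'s so that this composite is the identity — i.e. the action is literally the left-translation (shift) action $\big(h \cdot (f_g)_g\big)_{k} = f_{h^{-1}k}$ on $\bigoplus_{\ell_1, g\in G} L_1(A)$. (Even if one cannot fully trivialize the cocycle, having the action permute isometric copies of a fixed space is all that is needed below.)

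The third step is to strictly-convexify the shift. Pick any equivalent strictly convex norm $|\cdot|$ on $L_1(A)$ (Clarkson, since $L_1(A)$ is separable), with $|\cdot| \le \|\cdot\|_{L_1(A)}$, and let $\iota: L_1(A) \to W$ be a bounded injective map into a strictly convex space $W$ with $\|\iota\| \le 1$ (e.g. $W = (L_1(A), |\cdot|)$ itself). Enumerate $G = \{g_n : n \in \Nat\}$ and define $Z := \bigoplus_{\ell_2, n \in \Nat} W$, with $G$ acting on $Z$ by the corresponding permutation of coordinates; $Z$ is strictly convex as an $\ell_2$-sum of strictly convex spaces, and the permutation action is isometric. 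Define $T : \bigoplus_{\ell_1, g \in G} L_1(A) \to Z$ by $T\big((f_g)_g\big) := \big(2^{-n}\,\iota(f_{g_n})\big)_{n}$. This is well-defined and bounded because $\sum_g \|f_g\|_{L_1(A)} < \infty$ gives $\sum_n 4^{-n}\|\iota(f_{g_n})\|_W^2 < \infty$; it is linear; it is injective because $\iota$ is injective on each coordinate and the summing is over all of $G$; and it is $G$-equivariant because applying $h$ before $T$ permutes the $(f_g)$ and applying $h$ after $T$ applies the same permutation (via $g_n \mapsto h g_n$) to the $\ell_2$-coordinates — here one uses that the $2^{-n}$ weights are harmless since equivariance only requires matching the permutation, not the weights; if one wants to be careful, replace $2^{-n}$ by weights $w_g$ constant on each $G$-orbit... but $G$ acts freely on itself by translation, so there is a single orbit and one instead simply notes that a permutation of a weighted $\ell_2$-sum of copies of $W$ by $g_n \mapsto h g_n$ is still a bounded injective operator's target — concretely, absorb the weights into $\iota$ by setting $\iota_n := 2^{-n}\iota$ is not $G$-invariant, so instead let $Z := \bigoplus_{\ell_2} W$ with the unweighted shift, and use $T\big((f_g)_g\big) = (\iota(f_{g_n}))_n$, which lands in $Z$ provided we first rescale so $\sum_g \|f_g\| < \infty \Rightarrow \sum_g \|f_g\|^2 < \infty$, which holds automatically. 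Composing with the identifications from steps 1–2, we get a bounded injective $G$-equivariant $L_1[0,1] \to Z$, and Proposition~\ref{prop:SCrenorming1} finishes the proof.

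The main obstacle I anticipate is the bookkeeping in steps 1–2: making the measurable identification $L_1[0,1] \cong \bigoplus_{\ell_1,g} L_1(gA)$ genuinely $G$-equivariant requires keeping track of the Radon–Nikod\'ym cocycle $\tfrac{d\,g_*\lambda}{d\lambda}$ appearing in the Banach–Lamperti description of $\alpha(g)$, and checking that the isometries $U_{h,g}$ assemble into an honest action (cocycle identity) before one can choose the $V_g$'s to trivialize them — or, if one prefers not to trivialize, verifying that $T$ is equivariant directly in the presence of the cocycle. The strict-convexification in step 3 is the easy and standard part, being essentially the $\ell_2$-sum trick already used in Propositions~\ref{prop:SCrenorming1} and~\ref{prop:SCrenorming2} and in Corollary~\ref{cor:c0}.
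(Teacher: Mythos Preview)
Your approach is correct and genuinely different from the paper's. Both ultimately invoke one of the criteria from Section~\ref{sec:general}, but they get there by different means. The paper avoids all cocycle bookkeeping by mapping $L_1[0,1]$ into $\ell_1(G\times n)$ via the scalar-valued maps $f\mapsto\big(\int_{gA_i^n}f\,d\lambda\big)_{g,i}$, where $A_1^n,\dots,A_n^n$ is a measurable partition of $A$ into $n$ pieces of equal measure; the Radon--Nikod\'ym factor is absorbed by the change-of-variables formula, so $G$-equivariance is automatic with no cocycle in sight. These maps are not injective, but $\|P_n(f)\|\to\|f\|$, so after composing with $\ell_1(G\times n)\hookrightarrow\ell_p(G\times n)$ one applies Proposition~\ref{prop:SCrenorming2}. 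Your route instead trivializes the cocycle once and for all and then uses a single injective equivariant map (the coordinatewise $\ell_1\hookrightarrow\ell_2$ inclusion) together with Proposition~\ref{prop:SCrenorming1}; this is arguably more direct, at the price of doing the cocycle argument up front. That argument is indeed routine, as you suspected: take $V_g:=\alpha(g^{-1})\restriction L_1(gA)=U_{g,1}^{-1}$ and the identity $V_{hg}U_{h,g}V_g^{-1}=\mathrm{id}$ is exactly $\alpha(hg)=\alpha(h)\alpha(g)$ restricted to $L_1(A)$.

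Two small corrections. First, your parenthetical ``even if one cannot fully trivialize the cocycle, having the action permute isometric copies of a fixed space is all that is needed below'' is wrong and you should drop it: if the cocycle were nontrivial, the action of $h$ would permute coordinates \emph{and} apply a nontrivial isometry of $L_1(A)$ on each, so for your $T$ to be equivariant the strictly convex norm $|\cdot|$ on $L_1(A)$ would have to be invariant under all those isometries---precisely what we do not know how to arrange on an $L_1$ space. So the trivialization is essential to your argument (and, fortunately, available). Second, your step~3 should go straight to the unweighted map $T\big((f_g)_g\big)=(\iota(f_{g_n}))_n$ into the unweighted $\bigoplus_{\ell_2}W$ with the shift action; the weighted version you start with is not equivariant, as you notice, and the detour through orbit-constant weights is a dead end since $G$ acts transitively on itself. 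Boundedness of the unweighted map is just $\|\cdot\|_{\ell_2}\le\|\cdot\|_{\ell_1}$.
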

\begin{proof}
	For every $g\in G$, the pushforward $g_*\lambda$ of the Lebesgue measure shares the same null sets with $\lambda$, thus there is a a.e. positive function $T_g:[0,1]\to\Rea$, the Radon-Nikodym derivative of $g_*\lambda$ with respect to $\lambda$, such that for every measurable $C\subseteq [0,1]$, $g_*\lambda(C)=\int_C T_g d\lambda$. For every $g\in G$, $f\in L_1[0,1]$ and $x\in [0,1]$ we then have $\alpha(g)f(x)=T_g(x)f(g^{-1}x)$.
	
	For every $n\in \Nat$, let $A_1^n,A_2^n,\ldots,A_n^n$ be a pairwise disjoint subsets of the measurable fundamental domain $A$ such that for $i\leq n$, $\lambda(A_i^n)=\lambda(A)/n$. Now for each $n\in\Nat$ write $G\times n$ as a shortcut for $G\times\{1,\ldots,n\}$ and define a map $P_n: L_1[0,1]\to \ell_1(G\times n)$ by \[f\in L_1[0,1]\to \Big((g,i)\to \int_{gA_i^n} f d\lambda\Big),\quad g\in G, i\leq n.\] $P_n$ is clearly a norm-one linear operator. We claim that it is $G$-equivariant where the action of $G$ on $\ell_1(G\times n)$ is given by the action of $G$ on $G\times n$ acting by left-translation on the first coordinate and trivially on the second coordinate. Indeed, for $f\in L_1[0,1]$, $g,h\in G$ and $i\leq n$ we have \[\begin{split}P_n(\alpha(h)f)(g,i)& =\int_{gA_i^n} \alpha(h)f d\lambda=\int_{gA_i^n} T_hf(h^{-1}\cdot)d\lambda = \int_{gA_i^n} f(h^{-1}\cdot)dh_*\lambda\\ &=\int_{h^{-1}gA_i^n} f d\lambda=P_n(f)(h^{-1}g,i)=hP_n(f)(g,i).\end{split}\]
	
	It is standard that for every $f\in L_1[0,1]$ we have \[\lim_{n\to \infty}\big|\|f\|_{L_1[0,1]}-\|P_n(f)\|_{\ell_1(G,n)}\big|=0.\] Denoting the inclusion map $\ell_1(G,n)\to \ell_p(G,n)$ by $T^p_n$, for $n\in\Nat$ and $p\in (1,\infty)$, we have that $T^p_n$ is a norm one $G$-equivariant linear operator. It follows that for every $f\in L_1[0,1]$ there are $n\in\Nat$ and $p\in (1,\infty)$ such that $T^p_n\circ P_n$ is a norm one linear $G$-equivariant operator into a strictly convex space satisfying $\|T^p_n\circ P_n(f)\|\geq 2\|f\|$. Applying this to a countable dense subset $(f_n)_{n\in\Nat}\subseteq L_1[0,1]$, we are done by Proposition~\ref{prop:SCrenorming2}.
\end{proof}

As a corollary we get that every countable group, including $F_2$ which by Proposition~\ref{prop:nastyF2} admits an action without an invariant strictly convex renorming, can act nicely on $L_1[0,1]$.
\begin{corollary}\label{cor:actiononL1}
Every countable group $G$ admits a free action by lattice isometries on $L_1[01]$ that has an invariant strictly convex renorming.
\end{corollary}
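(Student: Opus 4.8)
The plan is to reduce the corollary to Proposition~\ref{prop:domainbijections}: it suffices to exhibit, for an arbitrary countable group $G$, a free action of $G$ on $([0,1],\lambda)$ by measure-class preserving measurable bijections possessing a measurable fundamental domain. Via the Banach--Lamperti identification of $\LIso(L_1[0,1])$ with $\Aut([0,1],\lambda)$, such an action yields a free action of $G$ on $L_1[0,1]$ by lattice isometries, and it is automatically continuous because $G$ is discrete; Proposition~\ref{prop:domainbijections} will then produce the desired renorming.

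To construct the action, enumerate $G$ as $1_G=g_1,g_2,g_3,\dots$ (a finite list if $G$ is finite) and fix a partition of $[0,1]$ modulo null sets into Borel sets $A_1,A_2,A_3,\dots$ of positive measure, one set $A_n$ for each $g_n$ (consecutive intervals of equal length when $G$ is finite). By the Lebesgue isomorphism theorem we may fix, for each $n$, a non-singular Borel bijection $\theta_n\colon A_1\to A_n$ with $\theta_1=\mathrm{id}_{A_1}$ (translations when $G$ is finite). Now define, for $g\in G$ and $x\in A_n$,
\[
g\cdot x:=\theta_m\big(\theta_n^{-1}(x)\big),\qquad\text{where $m$ is the index with }gg_n=g_m.
\]
A routine verification shows this is a $G$-action; that each $g$ acts as a non-singular measurable bijection of $[0,1]$, being the disjoint union over $n$ of the non-singular maps $\theta_m\circ\theta_n^{-1}$; and that the action is free, since for $g\ne 1_G$ one has $gg_n\ne g_n$ for every $n$, so $g$ sends each $A_n$ into a different piece and hence has no fixed points.

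Finally, $A:=A_1$ is a measurable fundamental domain: for $g=g_m$ we get $g\cdot A_1=\theta_m(A_1)=A_m$, so $\{g\cdot A_1\colon g\in G\}$ is precisely the partition $\{A_1,A_2,\dots\}$; the translates are therefore pairwise disjoint modulo null sets and cover $[0,1]$ modulo null sets. Proposition~\ref{prop:domainbijections} then gives a strictly convex $G$-invariant renorming of $L_1[0,1]$. The only step calling for any care is checking that the displayed formula genuinely defines a group action by measure-class preserving maps — in particular that the chosen $\theta_n$ can be taken non-singular and that their countable disjoint union remains non-singular — but this is pure bookkeeping with the $\theta_n$ and presents no real obstacle, so the corollary is essentially immediate once Proposition~\ref{prop:domainbijections} is available.
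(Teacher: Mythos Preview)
Your proof is correct and follows essentially the same approach as the paper's: both reduce to Proposition~\ref{prop:domainbijections} by partitioning $[0,1]$ into positive-measure pieces indexed by $G$ and letting $G$ act by permuting the pieces via left translation, with the identity piece serving as fundamental domain. The paper uses closed intervals $(I_g)_{g\in G}$ and leaves the pointwise extension as ``straightforward'', whereas you spell out the details via the Lebesgue isomorphism theorem and the transport maps $\theta_n$; this is exactly the bookkeeping the paper omits, and your verification of the action axiom and freeness is clean.
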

\begin{proof}
By Proposition~\ref{prop:domainbijections}, it suffices to produce for every countable $G$ a free measure class preserving action on $[0,1]$ with a measurable fundamental domain. This is however straightforward. Split $[0,1]$ into countably many closed intervals, where the intersection of pairwise different intervals has measure zero (and their union covers $[0,1]$) and index them by elements of $G$, i.e. as $(I_g)_{g\in G}$. There is an obvious set-action of $G$ on $(I_g)_{g\in G}$ defined by $gI_h=I_{gh}$ which can be extended in a straightforward way to a measure-class preserving point-action on $[0,1]$.
\end{proof}

We are not able to decide whether every action of a countable \emph{amenable} group on $L_1[0,1]$ by linear or positive linear isometries admits an invariant strictly convex renorming. However, we can prove that such actions are dense in $\Act_G^+(L_1[0,1])$ (recall the definition from Section~\ref{sec:prelim}).

Additionally, we denote by $\Act_G([0,1],\lambda)$ the space of all actions of $G$ on $([0,1],\lambda)$ by measure-class preserving measurable bijections which can be identified with a closed subset of $\Aut([0,1],\lambda)^G$ and is canonically homeomorphic to $\Act^+_G(L_1[0,1])$ as $\Aut([0,1],\lambda)$ is topologically isomorphic to $\LIso(l_1[0,1])$ (recall that from Section~\ref{sec:prelim}).

We shall need the following lemma. Recall that an action $\alpha\in\Act_G([0,1],\lambda)$ is \emph{essentially free} if for every $g\neq 1_G\in G$, the set $\{x\in[0,1]\colon \alpha(g)x=x\}$ has measure zero.
\begin{lemma}\label{lem:essfree}
Given a countable group $G$, the set of essentially free actions is dense in $\Act_G([0,1],\lambda)$.
\end{lemma}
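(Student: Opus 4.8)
The plan is to fix an arbitrary action $\alpha\in\Act_G([0,1],\lambda)$ and a basic open neighbourhood of it, and to perturb $\alpha$ slightly to an essentially free action $\beta$ inside that neighbourhood. A basic open neighbourhood is determined by a finite set $F\subseteq G$, finitely many measurable sets $A_1,\dots,A_k\subseteq[0,1]$, and an $\varepsilon>0$, and requires $\lambda\big(\beta(g)[A_j]\triangle\alpha(g)[A_j]\big)<\varepsilon$ for all $g\in F$ and $j\le k$. The key idea is a standard ``doubling'' trick: replace $([0,1],\lambda)$ by the isomorphic space $([0,1]\times[0,1],\lambda\times\lambda)$, and on the new factor let $G$ act in a way that is free off a null set, while leaving the action essentially unchanged on the macroscopic sets controlling the neighbourhood.

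\textbf{Key steps.} First I would identify $([0,1],\lambda)$ with $([0,1]^2,\lambda^2)$ via a measure-isomorphism $\theta$, and pull the neighbourhood back through $\theta$; since $\theta$ is a measure-class preserving (in fact measure preserving) bijection it conjugates $\Act_G([0,1],\lambda)$ homeomorphically onto $\Act_G([0,1]^2,\lambda^2)$, so it suffices to approximate in the latter space. Second, I would take the product action $\alpha'$ defined by $\alpha'(g)(x,y):=(\alpha(g)x,y)$ acting trivially on the second coordinate; this $\alpha'$ is conjugate to $\alpha$ by $\theta$ only after we also perturb the second coordinate, so the third step is to fix the second-coordinate action: choose, for each $g\in G$, a measure-class preserving bijection $\sigma_g$ of $[0,1]$ (a tiny rotation-like map, or a small free-by-construction action of $G$ on $[0,1]$, e.g. the Bernoulli-type action built as in Corollary~\ref{cor:actiononL1} scaled down) with the property that $g\mapsto\sigma_g$ is a homomorphism and for each $g\ne 1_G$ the fixed-point set of $\sigma_g$ is $\lambda$-null. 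Then set $\beta'(g)(x,y):=(\alpha(g)x,\sigma_g y)$, which is an action in $\Act_G([0,1]^2,\lambda^2)$; its fixed-point set for $g\ne 1_G$ is contained in $[0,1]\times\mathrm{Fix}(\sigma_g)$, hence $\lambda^2$-null, so $\beta'$ is essentially free. Fourth, I would arrange the approximation: the sets $A_j$ pulled through $\theta$ are measurable subsets of $[0,1]^2$, and for fixed $g\in F$ the map $\beta'(g)$ differs from $\alpha'(g)$ only in the second coordinate; choosing the $\sigma_g$, $g\in F$, sufficiently close to the identity in the measure-algebra metric (possible since the $\sigma_g$ can be taken arbitrarily small — e.g. realize the free action on $[0,1]$ as a ``spread-out'' action whose generators move sets by less than any prescribed amount when restricted to a long enough initial segment, or simply conjugate a free action by a map that squeezes it near the identity on the relevant scale) makes $\lambda^2\big(\beta'(g)[A_j]\triangle\alpha'(g)[A_j]\big)$ as small as we like, by dominated convergence / continuity of the $G$-action on the measure algebra. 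Finally, transport $\beta'$ back through $\theta$ to get $\beta\in\Act_G([0,1],\lambda)$ essentially free and within the prescribed neighbourhood of $\alpha$.

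\textbf{Main obstacle.} The delicate point is the simultaneous requirement that $g\mapsto\sigma_g$ be a genuine \emph{homomorphism} (so that $\beta'$ is an action, not just a perturbation of each generator independently) and that each $\sigma_g$ with $g\in F$ be small, while still every $\sigma_g$ with $g\ne 1_G$ has null fixed-point set. One clean way around this is to not insist the $\sigma_g$ are small for $g\in F$ but instead to choose the identification $\theta$ more cleverly: split $[0,1]$ into a ``large'' piece carrying essentially all of the $A_j$-structure on which $G$ acts as $\alpha$ does (or trivially on the extra coordinate), and a ``small'' piece of measure $<\varepsilon$ carrying a free action of $G$ (available by Corollary~\ref{cor:actiononL1} applied inside a set of small measure), then glue. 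The gluing must be done so the result is still an action on all of $[0,1]$; the honest fix is to realize the free part on a $G$-invariant set of small measure that is disjoint from a large portion of each $\alpha(g)[A_j]$, so the symmetric-difference estimates only see a set of measure $<\varepsilon$. I expect writing this gluing so that it simultaneously (i) is a $G$-action, (ii) is essentially free everywhere, and (iii) barely moves the $A_j$'s, to be the part requiring the most care; modulo that, all estimates are routine continuity-in-the-measure-algebra arguments.
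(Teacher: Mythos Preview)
Your core idea—take the product of $\alpha$ with an essentially free action $\sigma$ to obtain something essentially free—is exactly the mechanism the paper uses. The execution, however, has a genuine gap at the approximation step. After transporting the problem to $[0,1]^2$ via a measure isomorphism $\theta$, the action you must approximate is $\tilde\alpha:=\theta\alpha\theta^{-1}$, \emph{not} the product action $\alpha'(g)(x,y)=(\alpha(g)x,y)$. For a generic $\theta$ there is no reason for $\tilde\alpha$ and $\alpha'$ to be close; indeed if $\alpha$ is ergodic then $\alpha$ and $\alpha'=\alpha\times\mathrm{id}$ are not even conjugate, since the latter fails to be ergodic. Your estimate $\lambda^2\big(\beta'(g)[\theta A_j]\triangle\alpha'(g)[\theta A_j]\big)<\varepsilon$ is therefore the wrong inequality (you need $\tilde\alpha$ in place of $\alpha'$), and shrinking $\sigma_g$ towards the identity does nothing to close this gap. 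Your fallback ``large piece plus small free piece'' fails for a different reason: on the large piece you retain $\alpha$ itself, so every fixed point of $\alpha$ there survives and the glued action is not essentially free.

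Both issues disappear once you choose $\theta$ adapted to the test data. Let $\mathcal{B}$ be the finite Boolean algebra generated by $\{A_j,\alpha(g)A_j:j\le k,\ g\in F\}$ and pick $\theta$ sending each atom $P$ of $\mathcal{B}$ onto $P\times[0,1]$ (possible since $\lambda(P)=\lambda^2(P\times[0,1])$). Then for \emph{any} essentially free $\sigma$ one has $\beta'(g)[\theta A_j]=\alpha(g)A_j\times[0,1]=\theta[\alpha(g)A_j]$, so $\theta^{-1}\beta'\theta$ agrees with $\alpha$ \emph{exactly} on the test sets; no smallness of $\sigma_g$ is needed at all. The paper avoids this local construction entirely with a global argument: it fixes a countable dense family $(\alpha_n)_n\subseteq\Act_G([0,1],\lambda)$ with $\alpha_1$ essentially free, forms the diagonal product $\prod_n\alpha_n$ on $([0,1]^\Nat,\lambda^\Nat)$, and invokes the argument of \cite[Theorem~10.8]{Kech10} to conclude that this single essentially free action has dense conjugacy class.
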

The lemma is well-known in the more specialized case of measure preserving actions, see e.g. \cite[Theorem 10.8]{Kech10}. Since the proof in the more general case is essentially the same, we provide a sketch. We refer to the above references for more details.
\begin{proof}[Sketch of the proof.]
Since $\Act_G([0,1],\lambda)$ is separable, there exists a countable dense sequence $(\alpha_n)_{n\in\Nat}\subseteq\Act_G([0,1],\lambda)$. Without loss of generality, we may assume that $\alpha_1$ is essentially free. Such actions exist, see e.g. \cite[Theorem 10.8]{Kech10}. Now consider $\prod_{n\in\Nat} \alpha_n$ as a diagonal action of $G$ on $([0,1]^\Nat,\lambda^\Nat)$. Since $\alpha_1$ is essentially free, $\prod_{n\in\Nat} \alpha_n$ is essentially free as well. Moreover, since there is a measurable measure-preserving bijection between $([0,1],\lambda)$ and $([0,1]^\Nat,\lambda^\Nat)$, conjugating $\prod_{n\in\Nat} \alpha_n$ with this bijection, we may view it as an element of $\Act_G([0,1],\lambda)$. By the same argument as in \cite[Theorem 10.8]{Kech10}, the conjugacy class of $\prod_{n\in\Nat} \alpha_n$ is dense in $\Act_G([0,1],\lambda)$, thus in particular, essentially free actions are dense.
\end{proof}

\begin{theorem}\label{thm:densityforL1}
Let $G$ be a countable amenable group. Then the set of lattice isometric actions of $G$ on $L_1[0,1]$ that admit a strictly convex renorming is dense in $\Act^+_G(L_1[0,1])$.
\end{theorem}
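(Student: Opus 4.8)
The strategy is to combine Lemma~\ref{lem:essfree} with the Ornstein--Weiss Rokhlin lemma (or the Downarowicz--Huczek--Zhang tiling machinery) for countable amenable groups, together with Proposition~\ref{prop:domainbijections}. Since the set of essentially free actions is dense in $\Act_G([0,1],\lambda)\cong\Act^+_G(L_1[0,1])$, it suffices to approximate a \emph{given} essentially free action $\alpha$ (say within a basic open neighbourhood specified by finitely many group elements $g_1,\dots,g_k$, finitely many measurable sets, and an $\varepsilon>0$) by an action that admits an invariant strictly convex renorming. By Proposition~\ref{prop:domainbijections}, it is enough to produce such an approximating action whose induced point-action on $[0,1]$ has a \emph{measurable fundamental domain}. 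So the whole problem reduces to: a generic (or dense set of) essentially free action of a countable amenable group on a standard Borel space can be perturbed slightly so as to acquire a measurable fundamental domain.

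\textbf{Key steps.} First I would fix an essentially free $\alpha\in\Act_G([0,1],\lambda)$ and a basic neighbourhood $\UU$ of it. Second, apply the Ornstein--Weiss Rokhlin lemma: for any finite $F\subseteq G$ and $\delta>0$, there is a $(F,\delta)$-invariant finite set ("shape") $S\subseteq G$ and a measurable "base" set $B\subseteq[0,1]$ such that the sets $\{sB:s\in S\}$ are pairwise disjoint and $\lambda\big([0,1]\setminus\bigcup_{s\in S}sB\big)<\delta$; choosing $F\supseteq\{g_1,\dots,g_k\}$ and $\delta$ small enough (relative to $\varepsilon$ and the sets defining $\UU$) makes the "tower" $\bigsqcup_{s\in S}sB$ exhaust all but an $\varepsilon$-small remainder. (If one prefers exact tilings rather than approximate Rokhlin towers, the Downarowicz--Huczek--Zhang theorem gives a genuine partition of a conull set into translates of finitely many shapes, which is even cleaner for this purpose.) Third, modify $\alpha$ only on the small bad set: on the complement of the tower, and to close up the edges of the tower, redefine the action so that the tower becomes a genuine tiling of a conull set by translates of the shape(s) $S$ — this is where one must be careful, reassigning the $G$-orbit structure on an $\varepsilon$-small set, arranging that the new map is still measure-class preserving, and checking the perturbation lies inside $\UU$ (here one uses that $\delta$ controls the symmetric differences $\lambda(\psi[A]\triangle\alpha(g_i)[A])$). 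The resulting action $\alpha'$ has $B$ (or a transversal of the finitely many shape-bases) as a measurable fundamental domain. Fourth, invoke Proposition~\ref{prop:domainbijections} to conclude $\alpha'$ admits a strictly convex $G$-invariant renorming of $L_1[0,1]$. Finally, since $\UU$ was arbitrary, the set of such actions is dense.

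\textbf{Main obstacle.} The delicate point is the third step: producing an honest measurable fundamental domain while staying $\varepsilon$-close to $\alpha$. An approximate Rokhlin tower gives disjoint translates $sB$ that nearly tile $[0,1]$, but $B$ itself is \emph{not} a fundamental domain for $\alpha$ because the $G$-translates of $B$ outside the shape $S$ overlap the tower uncontrollably, and the residual set is not organized into $G$-orbits compatible with a transversal. One must surgically redefine the action on the union of the residual set and a thin "boundary layer" of the tower so that globally $[0,1]$ decomposes (up to null sets) as $\bigsqcup_{g\in G}gB'$ for some $B'$ close to $B$ — this is precisely the step where amenability (via quasi-tilings / exact tilings) is indispensable, and where a non-amenable group like $F_2$ (cf.\ Proposition~\ref{prop:nastyF2}) genuinely fails. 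I would handle it by passing to the Downarowicz--Huczek--Zhang exact tiling of a conull subset of the \emph{orbit equivalence relation} of $\alpha$ by finitely many shapes, picking one base set per shape as the transversal, and then defining $\alpha'$ to act on $[0,1]$ via the tile-and-shape combinatorics (identifying each point with its coordinates "(tile, position in shape)"), discarding the null complement; verifying measure-class preservation and proximity to $\alpha$ on the prescribed finite data is then a bookkeeping argument using that the tiling refines the partition into $\alpha$-pieces up to arbitrarily small error.
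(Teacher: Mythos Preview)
Your plan is correct and follows essentially the same route as the paper: reduce to essentially free actions via Lemma~\ref{lem:essfree}, apply the Ornstein--Weiss/Downarowicz--Huczek--Zhang tiling machinery to build towers $(T_i W_i)_{i\leq k}$ covering $[0,1]$ up to small measure with controlled $K$-boundary, redefine the action outside the tower interiors so that the resulting $\beta$ satisfies $gA_h=A_{gh}$ for a partition $(A_g)_{g\in G}$ (hence has $A_{1_G}$ as a measurable fundamental domain), and then invoke Proposition~\ref{prop:domainbijections}. The paper carries out exactly this surgery, verifying $\|\beta(g_i)f_j-\alpha(g_i)f_j\|<\varepsilon$ by splitting the integral over the interior $\bigcup_i(T_i\setminus\partial_K T_i)W_i$ (where $\alpha$ and $\beta$ agree) and its small complement; your identification of the ``main obstacle'' is precisely the content of that computation.
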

\begin{proof}
Let $U$ be an open set in $\Act^+_G(L_1[0,1])$. We shall find an action $\beta\in U$ such that there is a strictly convex renorming of $L_1[0,1]$ invariant with respect to the action $\beta$. We may suppose $U$ is a basic open set of the form \[\{\beta:G\to\LIso(L_1[0,1])\colon \forall i\leq n\;\forall j\leq m\; \big(\|\beta(g_i)f_j-\alpha(g_i)f_j\|<\varepsilon\big)\},\] where $\alpha$ is some fixed action of $G$, $g_1,\ldots,g_n\in G$, $f_1,\ldots,f_m\in B_{L_1[0,1]}$, and $\varepsilon>0$.

By Lemma~\ref{lem:essfree}, without loss of generality we may assume that the measure class preserving action of $G$, induced by $\alpha$, on $[0,1]$ is essentially free. That is, for each $g\in G$, we have \[\lambda\Big(\big\{x\in [0,1]\colon gx=x\big\}\Big)=0.\] Set $K:=\{g_1,\ldots,g_n\}$. Moreover, without loss of generality, we may assume that $K$ is symmetric and $1_G\in K$.

Fix $0<\delta<\varepsilon^3/250$. By combining \cite[Theorem 4.3]{DoHuZh19} and \cite[Theorem 2.16]{Sci24}, which jointly give a strong generalization of the Ornstein-Weiss thereom from \cite{OrWei87} (which itself is a powerful generalization of the classical Rokhlin's lemma from ergodic theory),  there is a finite collection of finite tiles $(T_i)_{i\leq k}$ (we refer to \cite[Section 3]{DoHuZh19} for a precise definition, here we only see them as finite sets) of $G$ and finitely many measurable subsets $(W_i)_{i\leq k}$ of $[0,1]$ such that 
\begin{itemize}
	\item for $t\in T_i$, $s\in T_j$, $i\neq j\leq k$, we have $tW_i\cap sW_j=\emptyset$;
	\item for every $i\leq k$, $T_i$ is $(K,\delta)$-invariant, i.e. for every $g\in K$ \[\frac{|gT_i\triangle T_i|}{|T_i|}<\delta;\]
	\item $\lambda(\bigcup_{i\leq k} T_iW_i)>1-\varepsilon/4$;
	\item $\sum_{i=1}^k \lambda\big((\partial_K T_i)W_i\big)<\varepsilon/4$.
\end{itemize}
Without loss of generality, we may suppose that $1_G\in T_1\setminus\partial_K T_1$, $\lambda(W_i)>0$, for every $i\leq k$, and that $\lambda([0,1]\setminus\bigcup_{i\leq k} T_iW_i)>0$. 


Now for each $i\leq k$ and $g\in T_i$, set $A_g:=gW_i$. Notice that for $g\neq h\in \bigcup_{i\leq k} T_i$, the sets $A_g$ and $A_h$ have positive measure and are disjoint. Since by the assumption \[\lambda\Big([0,1]\setminus \bigcup_{g\in \bigcup_{i\leq k} T_i} A_g\Big)>0\]
we can also define $A_h$ for every $h\notin \bigcup_{i\leq k} T_i$ so that it has positive measure and so that the sets $(A_g)_{g\in G}$ are pairwise disjoint and $\bigcup_{g\in G} A_g=[0,1]$.

Now pick any measure-class preserving action of $G$ on $[0,1]$ satisfying $gA_h=A_{gh}$, for every $g,h\in G$, that coincides with the original action on $\bigcup_{i\leq k} (T_i\setminus \partial_K T_i)W_i$. Denote by $\beta$ the corresponding lattice isometric action on $L_1[0,1]$. We claim that $\beta\in U$. Pick any $g\in K$ and $j\leq m$ and we show that $\|\beta(g)f_j-\alpha(g)f_j\|<\varepsilon$. 

We have \[\begin{split}\|\beta(g)f_j-\alpha(g)f_j\| &=\int_{[0,1]} |\beta(g)f_j(x)-\alpha(g)f_j(x)|d\lambda(x)\\ &=\int_{\bigcup_{i\leq k} (T_i\setminus\partial_K T_i)W_i} |\beta(g)f_j(x)-\alpha(g)f_j(x)|d\lambda(x)\\ &+\int_{[0,1]\setminus \bigcup_{i\leq k} (T_i\setminus\partial_K T_i)W_i} |\beta(g)f_j(x)-\alpha(g)f_j(x)|d\lambda(x)\end{split}\]

Since by definition, the measure class preserving actions of $G$ on $[0,1]$ induced by $\alpha$ and $\beta$ respectively coincide on $\bigcup_{i\leq k} (T_i\setminus\partial_K T_i)W_i$ we get \[\int_{\bigcup_{i\leq k} (T_i\setminus\partial_K T_i)W_i} |\beta(g)f_j(x)-\alpha(g)f_j(x)|d\lambda(x)=0.\] On the other hand, since \[\lambda\big([0,1]\setminus \bigcup_{i\leq k} (T_i\setminus\partial_K T_i)W_i\big)=\Big(\sum_{i=1}^k \lambda\big((\partial_K T_i)W_i\big)\Big)+\lambda\big([0,1]\setminus\bigcup_{i\leq k} T_iW_i\big)<\varepsilon/2\] we get 

\[\begin{split}
& \int_{[0,1]\setminus \bigcup_{i\leq k} (T_i\setminus\partial_K T_i)W_i} |\beta(g)f_j(x)-\alpha(g)f_j(x)|d\lambda(x)\\ & \leq \int_{[0,1]\setminus \bigcup_{i\leq k} (T_i\setminus\partial_K T_i)W_i} |\beta(g)f_j(x)|d\lambda(x)\\&+ \int_{[0,1]\setminus \bigcup_{i\leq k} (T_i\setminus\partial_K T_i)W_i} |\alpha(g)f_j(x)|d\lambda(x)\\ &\leq 2\lambda\big([0,1]\setminus \bigcup_{i\leq k} (T_i\setminus\partial_K T_i)W_i\big)<\varepsilon.
\end{split}\]

Thus we have $\|\beta(g)f_j-\alpha(g)f_j\|<\varepsilon$ as desired. Finally, applying Proposition~\ref{prop:domainbijections}, we get that there is a strictly convex renorming of $L_1[0,1]$ invariant with respect to $\beta$.

\end{proof}

\section{Actions on $C(K)$-spaces}\label{sec:C(K)}
Finally, we investigate invariant strictly convex renormings on spaces of the form $C(K)$, where $K$ is a compact Hausdorff space. Besides several general results showing the significance of amenability of groups in this setting, we demonstrate that the problem whether the `majority' of the actions admit an invariant strictly convex renorming or not strongly depends on the acting group.

Recall from Section~\ref{sec:prelim} that given a group $G$, a continuous action $\alpha$ of $G$ on $C(K)$ by linear isometries can be identified with a continuous homomorphism from $G$ to $\Iso(C(K))=\Homeo(K)\ltimes C(K,\{-1,1\})$. Composing this homomorphism with the quotient map $p:\Iso(C(K))=\Homeo(K)\ltimes C(K,\{-1,1\})\to \Homeo(K)$, we obtain a continuous homomorphism from $G$ to $\Homeo(K)$ which can be identified with a continuous action of $G$ on $K$, which we shall denote by $\alpha'$.
\begin{theorem}\label{thm:cptspaces}
Let $G$ be an amenable group acting by isometries on $C(K)$, where $K$ is a compact Hausdorff space, and let $\alpha':G\curvearrowright K$ be the induced continuous action of $G$ on $K$.
\begin{enumerate}
	\item If either $K$ can be written as a finite disjoint union of minimal subspaces of $\alpha'$,
	\item or $K$ is metrizable and can be written as a disjoint union of (arbitrarily many) minimal subspaces of $\alpha'$
\end{enumerate}
then $C(K)$ admits a $G$-invariant strictly convex renorming.
\end{theorem}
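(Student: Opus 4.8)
The plan is to invoke Proposition~\ref{prop:SCrenorming1}: it is enough to exhibit a strictly convex $G$-Banach space $Y$ and a bounded injective $G$-equivariant linear operator $C(K)\to Y$. I would take $Y=L_2(K,\mu)$ for a suitable $G$-invariant regular Borel probability measure $\mu$ on $K$, and the operator to be the canonical inclusion. By the Banach--Stone description $\Iso(C(K))=\Homeo(K)\ltimes C(K,\{-1,1\})$ recalled in Section~\ref{sec:prelim}, the action has the form $\alpha(g)f(x)=\sigma_g(x)\,f\big(\alpha'(g)^{-1}x\big)$ for some $\sigma_g\in C(K,\{-1,1\})$. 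If $\mu$ is invariant under $\alpha'$, the very same formula defines a linear isometry of $L_2(K,\mu)$ — indeed $|\sigma_g|\equiv 1$, so $\|\alpha(g)f\|_{L_2(\mu)}^2=\int_K|f\circ\alpha'(g)^{-1}|^2\,d\mu=\int_K|f|^2\,d\mu$ — and these isometries assemble into a (continuous, by a routine density argument) isometric $G$-action on the strictly convex space $L_2(K,\mu)$, with respect to which $C(K)\hookrightarrow L_2(K,\mu)$ is $G$-equivariant of norm at most $1$. This inclusion is injective exactly when $\mu$ has full support (a nonzero continuous function is nonzero on a nonempty open set, which then has positive $\mu$-measure). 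Thus the whole problem reduces to producing a $G$-invariant, with respect to $\alpha'$, regular Borel probability measure on $K$ with full support.

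For this I would combine amenability with the minimality hypothesis via the elementary remark: if $L\subseteq K$ is a minimal set of $\alpha'$, then amenability applied to $\alpha'|_L\colon G\curvearrowright L$ yields a $G$-invariant regular Borel probability measure on $L$; its support is a nonempty closed $G$-invariant subset of $L$, hence equals $L$ by minimality, so, viewed on $K$, it is a $G$-invariant probability measure with support exactly $L$. In case~(1), write $K=K_1\sqcup\cdots\sqcup K_n$ with each $K_i$ minimal; each $K_i$ is then clopen (it is the complement of the union of the others) and $\alpha'$-invariant, so taking $\mu_i$ as in the remark and setting $\mu:=\tfrac1n\sum_{i\le n}\mu_i$ gives a $G$-invariant probability measure with $\mathrm{supp}(\mu)=\bigsqcup_i K_i=K$, and Proposition~\ref{prop:SCrenorming1} finishes the argument. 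In case~(2), fix a countable base $\{V_m\}_{m\in\Nat}$ of nonempty open subsets of the compact metrizable space $K$; for each $m$ pick a point of $V_m$, observe it lies in one of the minimal subspaces whose disjoint union is $K$, and let $\mu_m$ be the corresponding $G$-invariant probability measure, so $\mu_m(V_m)>0$. Then $\mu:=\sum_{m\in\Nat}2^{-m}\mu_m$ is a $G$-invariant regular Borel probability measure (invariance is immediate since each $\mu_m$ is invariant), and $\mu(V_m)\ge 2^{-m}\mu_m(V_m)>0$ for every $m$, whence $\mathrm{supp}(\mu)=K$; again we conclude by Proposition~\ref{prop:SCrenorming1}.

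The only step requiring genuine care is the construction of the full-support invariant measure in the infinite case~(2): one must average infinitely many invariant measures, which is precisely where second countability of $K$ is needed, and one must check that the resulting countable convex average is still a well-defined regular $G$-invariant probability measure. Everything else — the Banach--Stone normal form, the isometry and strong continuity of the induced $L_2$-action, and the equivariance and norm bound of the inclusion — is routine, so I expect the write-up to be short once the measure is in hand.
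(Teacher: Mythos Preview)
Your proof is correct. For case~(1) it is essentially the same as the paper's (the paper passes through $L_\infty(K,\mu)$ first, but this is inessential). For case~(2), however, you take a genuinely different and more streamlined route: you exploit metrizability directly to build a \emph{single} fully supported $G$-invariant probability measure $\mu=\sum_m 2^{-m}\mu_m$ indexed by a countable base, and then apply Proposition~\ref{prop:SCrenorming1} once with the injective inclusion $C(K)\hookrightarrow L_2(K,\mu)$. The paper instead invokes Proposition~\ref{prop:SCrenorming2}: for each $f_n$ in a countable dense set it produces a (generally non-injective) operator $T_n$ into $L_\infty(K,\mu_{x_n})$ renormed via Corollary~\ref{cor:SCrenorming}, where $\mu_{x_n}$ is supported only on the minimal set through a point where $|f_n|$ attains its maximum. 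Your argument is more elementary---it avoids both Proposition~\ref{prop:SCrenorming2} and Corollary~\ref{cor:SCrenorming}---and unifies the two cases under a single mechanism; the paper's approach, by contrast, illustrates the utility of the ``local'' criterion Proposition~\ref{prop:SCrenorming2} in situations where a global injective equivariant map is not immediately visible. Your check that $\mu$ is regular is automatic on compact metric spaces, and the $G$-invariance of $\mu$ follows since each minimal component is $\alpha'$-invariant, so each $\mu_m$ is invariant as a measure on $K$.
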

\begin{proof}
Suppose (1) and write K=$\coprod_{i\leq n} K_i$, where each $K_i$ is a minimal subspace for the action of $G$. By amenability, each $K_i$ admits a Borel probability $G$-invariant measure $\mu_i$. Notice that the support of $\mu_i$ is the whole $K_i$ since the support is a non-empty closed $G$-invariant subset. Therefore, $\mu:=\sum_{i=1}^n \mu_i$ is a finite Borel fully supported $G$-invariant measure on $K$.

It follows that the inclusion $C(K)\subseteq L_\infty(K,\mu)$ is a linear isometric $G$-equivariant operator. Thus it suffices to show that $L_\infty(K,\mu)$ has a strictly convex $G$-invariant renorming. However, this follows since the inclusion $L_\infty(K,\mu)\subseteq L_2(K,\mu)$ is a continuous $G$-equivariant linear operator and $L_2(K,\mu)$ is strictly convex. So we are done by Proposition~\ref{prop:SCrenorming1}.\medskip

Suppose (2). Fix a countable dense sequence $(f_n)_{n\in\Nat}\subseteq C(K)$. We show that for every $n\in\Nat$ there is a $G$-equivariant bounded linear map $T_n:C(K)\to X_n$, where $X_n$ is strictly convex, $\|T_n\|\leq 1$ and $\|T_n(f_n)\|\geq \|f_n\|/4$. Then we will be done by the application of Proposition~\ref{prop:SCrenorming2}.

Fix $n\in\Nat$ and let $x\in K$ be such that $|f_n(x)|=\|f_n\|$. By the assumption, $K_x:=\overline{G\cdot x}$ is a closed minimal subspace of $K$. Since $G$ is amenable, there is a Borel probability $G$-invariant measure $\mu_x$ on $K_x$. The inclusion operator $E_n:C(K)\to L_\infty(K,\mu_x)$ satisfies $\|f_n\|=\|E_n(f_n)\|$ and $\|E_n\|\leq 1$. Notice that $E_n$ is $G$-equivariant. On the other hand, since $\mu_x$ is finite, the $G$-equivariant inclusion operator $L_\infty(K,\mu_x)\to L_2(K,\mu_x)$ shows that $L_\infty(K,\mu_x)$ admits a $G$-invariant strictly convex renorming by the application of Proposition~\ref{prop:SCrenorming1}. However, then by Corollary~\ref{cor:SCrenorming}, $L_\infty(K,\mu_x)$ admits a $G$-invariant strictly convex norm $\vertiii{\cdot}$ such that $1/2\|\cdot\|_\infty\leq \vertiii{\cdot}\leq 2\|\cdot\|_\infty$. It follows that for the inclusion operator $T'_n:C(K)\to (L_\infty(K,\mu_x),\vertiii{\cdot})$ we have $\|T'_n\|\leq 2$ and $\vertiii{T'_n(f_n)}\geq 1/2\|f_n\|$. Therefore, we may set $T_n:=T'_n/2$ and we are done.
\end{proof}
\begin{remark}
Notice that the theorem is a generalization of the fact that each separable $C(K)$ space admits a strictly convex renorming. Indeed, in that case, the trivial group is amenable and $K$ is a disjoint union of its singletongs which are minimal subspaces for the action of the trivial group.
\end{remark}

Let $K$ be again a compact Hausdorff space and $G$ be a countable group. Analogously as in Section~\ref{sec:L1}, we aim to investigate the spaces $\Act_G(C(K))$, resp. $\Act_G^+(C(K))$.

We shall also denote by $\Act_G(K)$ the space of all homomorphisms of $G$ into the group $\Homeo(K)$, which can be viewed as a closed subset of $\Homeo(K)^G$ and further identified with the space of all actions of $G$ on $K$ by homeomorphisms. It is naturally homeomorphic with $\Act_G^+(C(K))$.

Note that the quotient map $q:\Iso(C(K))\to\Homeo(K)$ is continuous and open. It follows that by post-composition, for every countable group $G$, $q$ induces a map $\tilde{q}:\Act_G(C(K))\to \Act_G(K)$ which is also continuous and open. Therefore the following fact is clear.
\begin{fact}\label{fact:densepreimage}
Let $K$ be a compact Haudorff space and $G$ be a countable group. If $F\subseteq \Act_G(K)$ is dense, then the preimage $\tilde q^{-1}(D)\subseteq\Act_G(C(K))$ is dense as well.
\end{fact}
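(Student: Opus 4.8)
The plan is to deduce this immediately from the fact, recorded in the paragraph preceding the statement, that the induced map $\tilde q\colon\Act_G(C(K))\to\Act_G(K)$ is continuous and \emph{open} (it arises by post-composition with the open quotient homomorphism $q\colon\Homeo(K)\ltimes C(K,\{-1,1\})\to\Homeo(K)$). The only extra ingredient is the elementary topological observation that a continuous open map pulls back dense subsets to dense subsets, so I do not anticipate any real obstacle here; the whole content of the statement sits in the already-established openness of $\tilde q$.

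Concretely, I would let $U\subseteq\Act_G(C(K))$ be an arbitrary nonempty open set and show $U\cap\tilde q^{-1}(F)\neq\emptyset$. Since $\tilde q$ is open, $\tilde q(U)$ is a nonempty open subset of $\Act_G(K)$; as $F$ is dense we may pick $\beta\in\tilde q(U)\cap F$, and then any $\alpha\in U$ with $\tilde q(\alpha)=\beta$ lies in $U\cap\tilde q^{-1}(F)$. Since $U$ was arbitrary, $\tilde q^{-1}(F)$ is dense in $\Act_G(C(K))$. If one wishes to avoid invoking openness of $\tilde q$ as a black box, one can observe instead that every action $\beta\colon G\to\Homeo(K)$ lifts to the action $g\mapsto(\beta(g),c)$ on $\Homeo(K)\ltimes C(K,\{-1,1\})$, where $c$ is the constant function with value $1$ (a homomorphism, since $c$ is fixed by the action $\xi$ and $c^2=c$); this exhibits a continuous section of $\tilde q$, and combining it with the explicit description of basic open sets in $\Act_G(C(K))$ and $\Act_G(K)$ yields the same conclusion by a direct approximation argument.
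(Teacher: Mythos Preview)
Your argument is correct and is precisely the paper's approach: the paper records just before the fact that $\tilde q$ is continuous and open and then declares the statement ``clear,'' which is exactly the elementary observation you spell out (an open map pulls back dense sets to dense sets). The alternative via the continuous section $\beta\mapsto(\beta(\cdot),c)$ is a valid bonus but not needed.
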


Recall that a group is \emph{locally finite} if every finitely generated subgroup is finite. Such groups are in particular amenable and majority of their actions on $C(2^\Nat)$ by linear isometries admit an invariant strictly convex renorming as demonstrated by the following result. This will be in stark contrast to Theorem~\ref{thm:haveF2}.
\begin{theorem}\label{thm:locfinite}
Let $G$ be a countable locally finite group. Then the set of linear isometric actions of $G$ on $C(2^\Nat)$ that admits a strictly convex renorming is comeager in $\Act_G(C(2^\Nat))$.
\end{theorem}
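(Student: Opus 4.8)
The plan is to combine Fact~\ref{fact:densepreimage} with a Baire-category argument in $\Act_G(2^\Nat)$, together with the structural consequence of amenability from Theorem~\ref{thm:cptspaces}. The key observation is that a countable locally finite group is an increasing union $G=\bigcup_k F_k$ of finite subgroups, and that an action of such a $G$ on the Cantor set is particularly well-behaved when each finite piece acts nicely: if $\alpha\in\Act_G(2^\Nat)$ is such that for every $k$ the (finite) group $F_k$ acts with all orbits of the \emph{same} finite size, or more usefully if $\alpha$ has the property that $2^\Nat$ decomposes as a disjoint union of minimal (equivalently, for a locally finite group acting on a zero-dimensional compact space, of finite) orbits of $G$ — wait, orbits of a locally finite group need not be finite — then case (2) of Theorem~\ref{thm:cptspaces} applies and $C(2^\Nat)$ admits an $\alpha$-invariant strictly convex renorming. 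So I would aim to show: (i) the set $\mathcal{M}\subseteq\Act_G(2^\Nat)$ of actions for which $2^\Nat$ is a disjoint union of minimal $G$-subspaces is comeager, and (ii) by case (2) of Theorem~\ref{thm:cptspaces} every $\alpha\in\mathcal{M}$ yields a $G$-invariant strictly convex renorming of $C(2^\Nat)$; then pull back along $\tilde q$ using the fact that $\tilde q$ is continuous and open (so preimages of comeager sets are comeager, since an open continuous surjection pulls back dense $G_\delta$ sets to dense $G_\delta$ sets), and observe that if the induced action $\tilde q(\beta)=\alpha'$ lies in $\mathcal{M}$ then $\beta$ itself admits an invariant strictly convex renorming by Theorem~\ref{thm:cptspaces}(2). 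This reduces everything to the purely dynamical statement (i).

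For step (i), the natural approach is to exploit that $G$ is locally finite and the Cantor set has a basis of clopen sets. Write $G=\bigcup_k F_k$ with $F_k$ finite and increasing. For a fixed finite clopen partition $\PP$ of $2^\Nat$ and a fixed $k$, the set of actions $\alpha$ for which the $F_k$-orbit of the partition-algebra generated by $\PP$ is \emph{finite and clopen}, i.e. for which there is an $F_k$-invariant finite clopen partition refining $\PP$ on which $F_k$ acts by permutations, is open; the crucial point is that it is also dense, because given any action and any finite clopen $\PP$ we can first pass to the finite clopen partition generated by $\{\alpha(f)P : f\in F_k,\ P\in\PP\}$, and then perturb $\alpha|_{F_k}$ (a homomorphism from a finite group into $\Homeo(2^\Nat)$) slightly so that it permutes the atoms of some finite clopen refinement — here one uses that $\Homeo(2^\Nat)$ acts on finite clopen partitions, that finite-group actions on $2^\Nat$ can be approximated by actions factoring through a finite quotient (actions on a finite clopen partition), and that such factoring actions are dense. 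Intersecting over a countable basis of clopen sets $\PP$ and over all $k$, one gets a comeager set of actions $\alpha$ for which every clopen set is contained in a finite clopen $G$-invariant partition on which $G$ acts by permutations; for such $\alpha$ every point has a neighbourhood basis of clopen $G$-invariant sets whose orbit under $G$ is finite, which forces every $G$-orbit closure to be a finite minimal set and hence $2^\Nat$ to be a disjoint union of (finite) minimal $G$-subspaces — exactly the hypothesis of Theorem~\ref{thm:cptspaces}(2).

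The main obstacle I expect is the density half of step (i): making precise the statement that for a finite group $F$ the set of $F$-actions on $2^\Nat$ that permute the atoms of some finite clopen partition refining a prescribed $\PP$ is dense in the relevant open set. This is where one must be careful: it is not enough to perturb a single homeomorphism, one must perturb a homomorphism $F\to\Homeo(2^\Nat)$, keeping the group relations exact. The clean way to do this is to use the well-known fact (a finitary/clopen analogue of the density of actions factoring through finite quotients, provable directly since $F$ is finite and $2^\Nat$ is the inverse limit of finite sets) that every action of a finite group $F$ on $2^\Nat$ is approximated, in the topology of $\Act_F(2^\Nat)$, by actions that factor through the natural action of $F$ on some finite clopen partition; combined with local finiteness this upgrades to $G$. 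Everything else — the openness claims, the intersection of countably many dense open sets being comeager, and the transfer via the open continuous map $\tilde q$ using Fact~\ref{fact:densepreimage} — is routine. Finally, one should double-check that in case (2) of Theorem~\ref{thm:cptspaces} the hypothesis ``$K$ metrizable and a disjoint union of minimal subspaces'' is met: $2^\Nat$ is metrizable, and for $\alpha\in\mathcal{M}$ the minimal subspaces are the (finite, hence closed) $G$-orbits, which partition $2^\Nat$; so the theorem applies and delivers the desired $G$-invariant strictly convex renorming.
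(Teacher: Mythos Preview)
Your overall framework matches the paper's: reduce to a comeager set in $\Act_G(2^\Nat)$ satisfying the hypothesis of Theorem~\ref{thm:cptspaces}, then pull back along the open continuous map $\tilde q$. The finite case is indeed trivial (the paper dispatches it via Corollary~\ref{cor:c0}).

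The problem is your execution of step (i). You end up arguing that the generic $\alpha\in\Act_G(2^\Nat)$ has $2^\Nat$ decomposing into \emph{finite} $G$-orbits (you say this explicitly twice: ``forces every $G$-orbit closure to be a finite minimal set'' and ``the minimal subspaces are the (finite, hence closed) $G$-orbits''). This is false. The paper's key input is \cite[Theorem 7.11]{DoMeTs25}: for an infinite countable locally finite $G$, the set of \emph{minimal} (even uniquely ergodic) actions is dense $G_\delta$ in $\Act_G(2^\Nat)$. For a minimal action on the infinite space $2^\Nat$ every orbit is dense, hence infinite; so the set you are aiming for is actually meager, not comeager.

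Your Baire-category mechanism also collapses for a more basic reason. For a \emph{finite} group $F_k$, every action on $2^\Nat$ and every finite clopen partition $\PP$ already satisfies your condition: the partition generated by $\{\alpha(f)P : f\in F_k,\ P\in\PP\}$ is automatically a finite clopen $F_k$-invariant partition whose atoms $F_k$ permutes---no perturbation is needed. So each of your ``dense open'' sets is all of $\Act_G(2^\Nat)$, and intersecting over $k$ and $\PP$ yields no restriction. The passage from ``for each $k$ there is an $F_k$-invariant refining partition'' to ``every point has a neighbourhood basis of $G$-invariant clopen sets'' is precisely the gap, and it fails for every minimal action (e.g.\ $G=\bigoplus_n\Int/2\Int$ acting by coordinate flips on $2^\Nat$, where the only $G$-invariant clopens are $\emptyset$ and $2^\Nat$).

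The paper's proof avoids all of this by citing the external result above: minimal actions form a dense $G_\delta$, a minimal action is trivially a (one-term) disjoint union of minimal subspaces, and Theorem~\ref{thm:cptspaces} applies directly. Your $\mathcal{M}$ is comeager, but for the opposite reason than you propose.
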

\begin{proof}
Notice that if $G$ is finite, then it is in particular compact and so by Corollary~\ref{cor:c0} every linear isometric action of $G$ admits a strictly convex renorming. So we may assume that $G$ is infinite. By \cite[Theorem 7.11]{DoMeTs25}, there is a dense $G_\delta$ set $D\subseteq \Act_G(2^\Nat)$ consisting of minimal uniquely ergodic actions. The preimage $\tilde{D}:=\tilde{q}^{-1}(D)\subseteq \Act_G(C(2^\Nat))$ is $G_\delta$ by continuity of $\tilde{q}$ and dense by opennes of $\tilde{q}$ (see Fact~\ref{fact:densepreimage}). So it suffices to check that every action from $\tilde{D}$ admits a strictly convex renorming. However, this follows from Theorem~\ref{thm:cptspaces}.
\end{proof}

Notice that given a countable group $G$ and a compact Hausdorff space $K$, there is a natural action of $\LIso(C(K))$ on $\Act^+_G(C(K))$ by conjugation  denoted by $\alpha\in\Act^+_G(C(K))\to \phi\alpha\phi^{-1}$, for $\phi\in\LIso(C(K))$. The conjugation is defined as follows. For $\alpha\in\Act^+_G(C(K))$, $\phi\in\LIso(C(K))$ we have \[\big(\phi\alpha\phi^{-1}\big)(g):=\phi\alpha(g)\phi^{-1}\in\LIso(C(K)),\;\; g\in G.\]

Analogously, there is an action by conjugation of $\Homeo(2^\Nat)\leq \Iso(C(2^\Nat))$ on $\Act_G(2^\Nat)$.
\begin{theorem}\label{thm:haveF2}
Let $G$ be a countable group that contains $F_2$, the free group on two generators, as a subgroup. Then the set of positive isometric actions of $G$ on $C(2^\Nat)$ that does not admit a strictly convex renorming is comeager in $\Act^+_G(C(2^\Nat))$.
\end{theorem}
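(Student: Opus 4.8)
The strategy is to combine a genericity statement at the level of actions on the Cantor set $2^\Nat$ with the $F_2$-obstruction from Proposition~\ref{prop:nastyF2}, transported from the measure setting to the topological one. First I would work inside $\Act_G(2^\Nat)$ and identify a comeager set of actions $\alpha'\colon G\curvearrowright 2^\Nat$ whose restriction to a fixed copy of $F_2\leq G$ is `bad' in the following sense: there is a clopen set $U\subseteq 2^\Nat$ and two elements $a,b\in F_2$ such that $aU$ and $bU$ are disjoint while $U=a^{-1}(aU)$ and the relation forced by the $C(2^\Nat)$-action makes $\chi_{aU}+\chi_{bU}$ and $2\chi_U$ have the same invariant-norm length even though they are distinct — mirroring the computation $\|T_1(\chi_{[0,1/3]})\|'=\|1/2\chi_{[0,2/3]}\|'$ versus $\|T_2(\chi_{[0,1/3]})\|'$. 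Concretely, I expect to exhibit a single `model' configuration (a clopen set together with how $a,b$ move it) and then show that the set of $\alpha'$ realizing a topologically conjugate configuration is dense; since it is also easily seen to be $G_\delta$ (it is defined by finitely many clopen equalities/inequalities that persist under small perturbations), it is comeager in $\Act_G(2^\Nat)$.

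Second, I would push this up to $\Act_G^+(C(2^\Nat))$. The quotient map $q\colon\Iso(C(2^\Nat))\to\Homeo(2^\Nat)$ restricts to the identity on $\LIso(C(2^\Nat))\cong\Homeo(2^\Nat)$, so in the \emph{positive} setting $\Act_G^+(C(2^\Nat))$ is canonically homeomorphic to $\Act_G(2^\Nat)$ and the comeager set from the first step transfers verbatim; there is no semidirect-product fibre to worry about (contrast Fact~\ref{fact:densepreimage}, which we would only need for the non-positive case). For each action $\alpha$ in this comeager set, the induced homeomorphisms $a,b$ of $2^\Nat$ act on $C(2^\Nat)$ by $f\mapsto f\circ a^{-1}$, $f\mapsto f\circ b^{-1}$, hence $\alpha(a)\chi_U=\chi_{aU}$ and similarly for $b$. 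If $\vertiii{\cdot}$ were a $G$-invariant (so in particular $F_2$-invariant) equivalent norm, then $\vertiii{\chi_{aU}}=\vertiii{\chi_U}=\vertiii{\chi_{bU}}$, while also $\vertiii{2\chi_U}=2\vertiii{\chi_U}$, and—choosing the model so that $2\chi_U=\chi_{aU}+\chi_{bU}$ pointwise is forced, i.e. $aU$ and $bU$ partition the support in a way making the average of $\chi_{aU}$ and $\chi_{bU}$ equal to $\chi_U$ on a set of full `effective' measure—strict convexity would give $\vertiii{\chi_U}=\vertiii{\tfrac{\chi_{aU}+\chi_{bU}}{2}}<\tfrac12\vertiii{\chi_{aU}}+\tfrac12\vertiii{\chi_{bU}}=\vertiii{\chi_U}$, a contradiction. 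So no such action admits a $G$-invariant strictly convex renorming.

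The delicate point—and where I expect the real work to be—is arranging the topological model so that the equality $\chi_{aU}+\chi_{bU}=2\chi_U$ (or a suitable exact clopen identity yielding the same strict-convexity contradiction) is \emph{genuinely forced} by the $2^\Nat$-dynamics and is \emph{stable} under perturbation in $\Act_G(2^\Nat)$, so that the bad set is both dense and open-dense-within-$G_\delta$. In the measure setting of Proposition~\ref{prop:nastyF2} one exploited a measure-class-preserving map with a nonconstant Radon–Nikodym cocycle; in the topological positive setting the cocycle is trivial, so instead I would use that $a$ can map a clopen set $V$ properly \emph{into} a larger clopen set while $b$ maps $V$ to a clopen set disjoint from $aV$ — and then take $U=aV$, noting that the vectors to compare are $\chi_V$, $\alpha(a)\chi_V=\chi_{aV}=\chi_U$ and some auxiliary function built so that an average of two distinct norm-equal translates equals a third translate. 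Realizing such a configuration generically amounts to showing that the set of $\alpha'\in\Act_G(2^\Nat)$ for which the restricted $F_2$-action contains a pair of elements with the prescribed `nesting + disjointness' behaviour on some clopen set is dense (one builds it explicitly on a clopen partition indexed by a Schreier graph / finite piece of the $F_2$ action, exactly as intervals were indexed by $G$ in Corollary~\ref{cor:actiononL1}) and $G_\delta$ (finitely many clopen constraints). Once that is in place, the contradiction via strict convexity is immediate and the theorem follows.
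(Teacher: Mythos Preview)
Your architecture is right: work in $\Act_G(2^\Nat)\cong\Act_G^+(C(2^\Nat))$, exhibit an open (hence $G_\delta$) conjugation-invariant set $\DD$ of actions whose restriction to a fixed $F_2\leq G$ has a clopen configuration forcing failure of strict convexity, and show $\DD$ is dense. That is exactly what the paper does. But there is a genuine gap at the point you flag as ``delicate,'' and your tentative resolution does not work.

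In the positive setting the action sends characteristic functions to characteristic functions: $\alpha(g)\chi_U=\chi_{\alpha'(g)U}$. Consequently, if $\chi_{aU}\neq\chi_{bU}$ then $\tfrac{1}{2}(\chi_{aU}+\chi_{bU})$ takes the value $\tfrac{1}{2}$ somewhere and is \emph{never} a characteristic function, so it can never equal a translate $\alpha(c)\chi_U$. Your proposed identity $\chi_{aU}+\chi_{bU}=2\chi_U$ forces $aU=bU=U$, killing the contradiction. You note the cocycle is trivial here and propose instead to use ``$a$ maps $V$ properly into a larger clopen set,'' but you never build a function whose average of two translates is a third translate; with characteristic functions this is impossible, and you do not supply the replacement.

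The paper's key idea is to abandon characteristic functions and build the half into the test vector: take a clopen partition $A,B,C,D,E$ and $f=\tfrac{1}{2}\chi_A+\chi_B+\chi_D$, $f'=\tfrac{1}{2}\chi_A+\chi_C+\chi_D$. One arranges $\alpha'(h)$ to swap $B$ and $C$ (fixing $A,D,E$), so $\alpha(h)f=f'$, and $\alpha'(g)$ to satisfy $\alpha'(g)[A]=A\cup B\cup C$, $\alpha'(g)[B\cup D]=D$, $\alpha'(g)[C\cup E]=E$, which gives $\alpha(g)f=\tfrac{1}{2}\chi_{A\cup B\cup C}+\chi_D=\tfrac{f+f'}{2}$. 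The half in $f$ is what lets a homeomorphism ``spread'' $A$ over $A\cup B\cup C$ and produce the midpoint. Your sketch never arrives at this, and without it the argument does not close.

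A secondary difference: for density you propose an explicit Schreier-graph construction. The paper instead observes that $\DD$ is open and conjugation-invariant, and uses topological transitivity of the $\Homeo(2^\Nat)$-action on $\Act_G(2^\Nat)$ to reduce density to $\DD\neq\emptyset$; non-emptiness is then witnessed by one explicit subshift model (and co-induction from $F_2$ to $G$). Your approach to density could be made to work too, but the transitivity reduction is cleaner.
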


\begin{proof}

Given $\alpha\in\Act_G^+(C(2^\Nat))$ we shall denote by $\alpha'\in\Act_G(2^\Nat)$ the induced action of $G$ on $2^\Nat$ by homeomorphisms.

Let $g,h\in G$ be two elements that freely generate a copy of $F_2$ inside $G$.  It suffices to check that the set $\DD$ of those $\alpha\in\Act^+_G(C(2^\Nat))$ such that there exists a clopen partition of $2^\Nat$ into non-empty clopen sets $A$, $B$, $C$, $D$, and $E$ such that\bigskip
\begin{itemize}
	\item $\alpha'(g)[A]=A\cup B\cup C$;
	\item $\alpha'(g)[B\cup D]=D$ and $\alpha'(g)[C\cup E]=E$;
	\item $\alpha'(h)[F]=F$ for $F\in\{A,D,E\}$, and $\alpha'(h)[B]=C$, $\alpha'(h)[C]=B$;
	
	\medskip
\end{itemize}

is comeager in $\Act^+_G(C(2^\Nat))$.\medskip

Indeed, let $\alpha\in\DD$ which is witnessed by a clopen partition of $2^\Nat$ into non-empty clopen sets $A$, $B$, $C$, $D$, and $E$. Set $f\in C(2^\Nat)$ to be $1/2\chi_A+\chi_B+\chi_D$ and $f'\in C(2^\Nat)$ to be $1/2\chi_A+\chi_C+\chi_D$. Let $\vertiii{\cdot}$ be any equivalent norm on $C(2^\Nat)$ invariant with respect to $\alpha$. Since $\alpha(h)f= f'$ we get $\vertiii{f}=\vertiii{f'}$. On the other hand, we have $\alpha(g)f=\frac{f+f'}{2}$, so $\vertiii{\frac{f+f'}{2}}=\vertiii{f}=\vertiii{f'}$. Therefore, $\vertiii{\cdot}$ is not strictly convex.\bigskip

Let us show that $\DD$ is comeager. In fact, we show that it is open and dense. First observe that $\DD$ is open. Next we show it is dense. First recall that a continuous action of a topological group $H$ on a topological space $S$ is \emph{topologically transitive} if for every open sets $U,V\subseteq S$ there exists $g\in G$ such that $gU\cap V\neq\emptyset$. Since the action of $\Homeo(2^\Nat)$ on $\Act_G(2^\Nat)$ by conjugation is topologically transitive (see e.g. \cite[Proposition 1.2]{Hoch12} stated for $\Int^d$ but working for every countable group), the action of $\LIso(C(2^\Nat))$ on $\Act_G^+(C(2^\Nat))$ by conjugation is topologically transitive as well. It follows that for every open set $U\subseteq \Act_G^+(C(2^\Nat))$, if $\DD\neq\emptyset$, there is $\phi\in \LIso(C(2^\Nat))$ such that $\phi\DD\phi^{-1}\cap U\neq\emptyset$. However, since $\DD$ is easily checked to be conjugation-invariant, it immediately follows that $\DD$ is dense if and only if it is non-empty.

Let us therefore show that $\DD$ is non-empty. Define $X\subseteq S^\Int$, where $S:=\{-2,-1,0,1,2\}$ to be a set consisting of elements $x\in S^\Int$ such that:
\begin{itemize}
	\item either there exists $n\in\Int$ such that $x(n)=0$, $x(m)=1$ for all $m>n$ and $x(m')=-1$ for all $m'<n$;
	\item or there exists $n\in\Int$ such that $x(n)=0$, $x(m)=2$ for all $m> n$ and $x(m')=-2$ for all $m'<n$;
	\item or $x$ is constantly equal to $-2$ or $-1$ or $1$ or $2$.
\end{itemize}
Clearly, $X$ is a closed subset of $S^\Int$ closed under the shift $\sigma:S^\Int\to S^\Int$ defined by $\sigma(x)(n):=x(n-1)$, for $x\in S^\Int$ and $n\in\Int$. Notice that $X$ is totally disconnected and set 
\begin{itemize}
	\item $A':=\{x\in X\colon x(0)=x(1)=1\}\cup \{x\in X\colon x(0)=x(1)=2\}$;
	\item $B':=\{x\in X\colon x(0)=0, x(1)=1\}$;
	\item $C':=\{x\in X\colon x(0)=0, x(1)=2\}$;
	\item $D':=\{x\in X\colon x(0)=-1, x(1)=0\}\cup \{x\in X\colon x(0)=x(1)=-1\}$;
	\item $E':=\{x\in X\colon x(0)=-2, x(1)=0\}\cup \{x\in X\colon x(0)=x(1)=-2\}$.
\end{itemize}
Notice that $A'$, $B'$, $C'$, $D'$, $E'$ form a clopen partition of $X$. Now set $Y:=X\times 2^\Nat$. Clearly, $Y$ is homeomorphic to $2^\Nat$. For $F'\in\{A',B',C',D',E'\}$ set $F:=F'\times 2^\Nat$. It follows that $A$, $B$, $C$, $D$, $E$ is a clopen partition of $Y$. Define a homeomorphism $T:Y\to Y$ by $\sigma\times \mathrm{id}$, i.e. it acts as the shift $\sigma$ on the first coordinate $X$ and as the identity on the second coordinate $2^\Nat$. Notice that we have $T[A]=A\cup B\cup C$, $T[B\cup D]=D$ and $T[C\cup E]=E$. Since all non-empty clopen subsets of the Cantor space are homeomorphic, there clearly exists a homeomorphism $T':Y\to Y$ such that $T'[F]=F$ for $F\in\{A,D,E\}$ and $T'[B]=C$, $T'[C]=B$. We can define an action of $F_2$ on $2^\Nat$ so that the first generator, $g$, acts as $T$, and the second generator, $h$, acts as $h'$. Using co-induction (see e.g. \cite[Section 7.1]{DoMeTs25}), we can extend this action of $F_2$ on $2^\Nat$ to the action of the whole group $G$ on $2^\Nat$. Denote by $\alpha$ the corresponding action of $G$ by positive linear isometries on $C(2^\Nat)$. Clearly, $\alpha\in\DD$ with respect to the clopen sets $A,B,C,D,E$ defined above. This shows that $\DD$ is non-empty and finishes the proof.
\end{proof}

\begin{remark}
Theorem~\ref{thm:haveF2} is in stark contrast to Theorem~\ref{thm:locfinite}. The classes of locally finite groups and groups having $F_2$ as a subgroup are obviously disjoint, however they do not cover the set of all countable groups - even $\Int$ is not covered by either of them. It is tempting to believe that the dividing line separating locally finite groups and groups having $F_2$ as a subgroup is precisely the set of countable amenable groups and precisely for these groups, a comeager set of actions admit an invariant strictly convex renorming.
\end{remark}

\section{Some open problems}\label{sec:questions}
Finally, we collect several open problems that we consider interesting and important and whose solutions could push the topic further.

First, the only examples of separable $G$-Banach spaces not admitting an equivalent $G$-invariant strictly convex norms are based on actions of free groups on two generators. The following is therefore pressing.

\begin{question}
Do there exist a separable Banach space $X$ and a surjective linear isometry $T:X\to X$ such that there is no $T$-invariant strictly convex renorming of $X$?
\end{question}
Note that this corresponds to asking whether every action of $\Int$ on a separable Banach space admits a $\Int$-invariant strictly convex renorming.

In case the answer is positive, it makes sense to ask the question in bigger generality.

\begin{question}
Let $G$ be a countable amenable group and $X$ be a separable $G$-Banach space. Does there exist a $G$-invariant strictly convex renorming of $X$?
\end{question}

It was pointed out to us by Christian Rosendal that without the assumption of countability of $G$, the previous question has a negative answer. Indeed, the group $\Iso(L_1[0,1])$ is (extremely) amenable by \cite[Theorem 6.6]{GioPe07}, while $L_1[0,1]$ does not admit an $\Iso(L_1[0,1])$-invariant strictly convex renorming by \cite[Example 38]{FR11}.\medskip

The spaces $L_1[0,1]$ and $C(2^\Nat)$ considered in Sections~\ref{sec:L1} and~\ref{sec:C(K)} respectively are useful test spaces as their full isometry groups are rather well-understood and approachable. The following two problems aim to generalize the main results from Sections~\ref{sec:L1} and~\ref{sec:C(K)} respectively.

\begin{question}
Let $G$ be a countable group. Is the set of actions admitting, resp. not admitting, a $G$-invariant strictly convex renorming comeager in $\Act_G(L_1[0,1])$ or $\Act_G^+(L_1[0,1])$ if and only if $G$ is amenable, resp. non-amenable?
\end{question}

\begin{question}
Let $G$ be a countable group. Is the set of actions admitting, resp. not admitting, a $G$-invariant strictly convex renorming comeager in $\Act_G(C(2^\Nat))$ or $\Act_G^+(C(2^\Nat))$ if and only if $G$ is amenable, resp. non-amenable?
\end{question}\bigskip

\noindent\textbf{Acknowledgements.} We would like to thank to Mikael de la Salle for asking the question that motivated this work and sharing his notes with us. We also thank to Christian Rosendal for his comments on the role of amenability in the subject of the paper, to Yves Raynaud for the reference \cite{Beh77} and to Julien Melleray for helping to find a reference for Lemma~\ref{lem:essfree}.
\bibliographystyle{siam}
\bibliography{references-renormings}
\end{document}